\newtheorem{theorem}{Theorem}[section]
\newtheorem{lemma}[theorem]{Lemma}
\newtheorem{corollary}[theorem]{Corollary}
\theoremstyle{definition}
\newtheorem{definition}[theorem]{Definition}
\newtheorem{proposition}[theorem]{Proposition}
\theoremstyle{remark}
\newtheorem{remark}[theorem]{Remark}
\numberwithin{equation}{section}
\begin{document}

\title{A symmetric $2$-tensor canonically associated to $Q$-curvature and its applications}


\author{Yueh-Ju Lin}
\address{(Yueh-Ju Lin) MSRI and Department of Mathematics, University of Michgan, Ann Arbor} 
\curraddr{The Mathematical Sciences Research Institute, Berkeley, CA 94720, USA}
\email{yuehjul@umich.edu}

\author{Wei Yuan}
\address{(Wei Yuan) School of Mathematics and Computational Science, Sun Yat-sen University, Guangzhou, Guangdong 510275, China}
\email{gnr-x@163.com}




\keywords{$J$-tensor, $Q$-curvature, $Q$-singular metric}

\thanks{}

\begin{abstract}
In this article, we define a symmetric $2$-tensor canonically associated to $Q$-curvature called $J$-tensor on any Riemannian manifold with dimension at least three. The relation between $J$-tensor and $Q$-curvature is precisely like Ricci tensor and scalar curvature. Thus it can be interpreted as a higher-order analogue of Ricci tensor. This tensor can also be used to understand Chang-Gursky-Yang's theorem on $4$-dimensional $Q$-singular metrics. Moreover, we show an \emph{Almost-Schur Lemma} holds for $Q$-curvature, which gives an estimate of $Q$-curvature on closed manifolds. 
\end{abstract}

\maketitle



\section{Introduction}

Let $M$ be a smooth manifold and $\mathcal{M}$ be the space of all metrics on $M$. Consider scalar curvature as a nonlinear map $$R: \mathcal{M} \rightarrow C^{\infty}(M); \ g\mapsto R_g.$$

It is well-known that the linearization of scalar curvature at a given metric $g$ (see \cite{Besse, C-L-N, F-M}) is
\begin{align}
\gamma_g h := DR_g\cdot h = - \Delta_g tr_g h + \delta_g^2 h - Ric_g \cdot h,
\end{align}
where $h\in S_2(M)$ is a symmetric $2$-tensor and $\delta_g = - div_g$. Thus, its $L^2$-formal adjoint is given by
\begin{align}
\gamma_g^* f = \nabla_g^2 f - g \Delta_g f - f Ric_g,
\end{align}
for any smooth function $f \in C^\infty(M)$.\\

An interesting observation is that, if we take $f$ to be constantly $1$, we get
$$ Ric_g = -\gamma_g^* 1 $$
That means we can recover Ricci tensor from $\gamma_g^*$. Furthermore, the scalar curvature is given by $$R_g= - tr_g \gamma_g^* 1.$$

Now let $(M^n,g)$ be an $n$-dimensional Riemannian manifold ($n\geq 3$). We can define the $Q$-curvature to be 
\begin{align}
Q_{g} = A_n \Delta_{g} R_{g} + B_n |Ric_{g}|_{g}^2 + C_nR_{g}^2,
\end{align}
where $A_n = - \frac{1}{2(n-1)}$ , $B_n = - \frac{2}{(n-2)^2}$ and
$C_n = \frac{n^2(n-4) + 16 (n-1)}{8(n-1)^2(n-2)^2}$.

In fact, $Q$-curvature was introduced originally to generalize the classic \emph{Gauss-Bonnet Theorem} on surfaces to closed $4$-manifolds $(M^4, g)$:
\begin{align}\label{Gauss_Bonnet_Chern}
\int_{M^4} \left( Q_g + \frac{1}{4} |W_g|^2_g \right) dv_g = 8\pi^2 \chi(M).
\end{align}
where $W_g$ is the Weyl tensor. 

Paneitz and Branson extended it to any dimension $n \geq 3$ (cf. \cite{Branson, Paneitz}) such that it satisfies certain conformal invariant properties. For more details, please refer to the appendix of \cite{L-Y}.\\

Like the scalar curvature, we can also view $Q$-curvature as a nonlinear map
$$Q: \mathcal{M} \rightarrow C^{\infty}(M); \ g\mapsto Q_g.$$

Let $$\Gamma_g : S_2(M) \rightarrow C^\infty(M)$$ be the linearization of $Q$-curvature at the metric $g$ and $$\Gamma_g^* : C^\infty(M) \rightarrow S_2(M)$$ be its $L^2$-formal adjoint.\\

Now we can define the central notion in this article:
\begin{definition}
Let $(M^n,g)$ be a Riemannian manifold ($n\geq3$). We define the symmetric $2$-tensor $$J_g : = -\frac{1}{2} \Gamma_g^* 1.$$ We say $(M,g)$ is \emph{$J$-Einstein}, if $J_g = \Lambda g$ for some smooth function $\Lambda \in C^\infty(M)$. In particular, it is \emph{$J$-flat}, if $\Lambda = 0$.
\end{definition}
 
 In \cite{L-Y}, we calculated the explicit expression of $\Gamma_g^*$ and showed that 
\begin{align}
 tr_g \Gamma_g^* f = \frac{1}{2} \left( P_g - \frac{n+4}{2} Q_g\right) f,
\end{align} 
for any $f \in C^\infty(M)$. Here $P_g$ is the \emph{Paneitz operator} defined by
\begin{align}
P_g = \Delta_g^2 - div_g \left[(a_n R_g g + b_n Ric_g) d\right] + \frac{n-4}{2}Q_g,
\end{align}
where $a_n = \frac{(n-2)^2 + 4}{2(n-1)(n-2)}$ and $b_n = - \frac{4}{n-2}$.

In particular, 
\begin{align*}
tr_g \Gamma_g^* 1  = - 2Q_g.
\end{align*}

Thus 
\begin{align}
tr_g J_g = Q_g.
\end{align}

On the other hand, for any smooth vector field $X \in \mathscr{X}(M)$ on $M$, 
\begin{align*}
\int_M \langle X,   \delta_g \Gamma_g^* f \rangle dv_g = \frac{1}{2}\int_M \langle L_X g, \Gamma_g^* f \rangle dv_g =\frac{1}{2} \int_M f\ \Gamma_g (L_X g)\ dv_g  = \frac{1}{2}\int_M \langle f dQ_g, X \rangle \ dv_g.
\end{align*}
Thus $$\delta_g \Gamma_g^* f =\frac{1}{2} fdQ_g$$ on $M$. Hence, 
\begin{align}
div_g J_g = \frac{1}{2} \delta_g \Gamma_g^* 1 = \frac{1}{4}dQ_g.
\end{align}

Recall that for Ricci tensor, we have $$tr_g Ric_g = R_g$$ and $$div_g Ric_g = \frac{1}{2} dR_g.$$

Therefore, if we consider $Q$-curvature as a higher-order analogue of scalar curvature, we can interpret $J_g$ as a higher-order analogue of Ricci curvature on Riemannian manifolds.\\

A notion closely related to $J$-tensor is the \emph{$Q$-singular metric}, which refers to a metric satisfying $\ker \Gamma_g^* \neq \{0\}$. Clearly, $J$-flat metrics are $Q$-singular, since it is equivalent to $1 \in \ker \Gamma_g^*$.

One of the motivations for us to study the $J$-flat manifold is to understand the following theorem by Chang-Gursky-Yang:
\begin{theorem}[Chang-Gursky-Yang \cite{C-G-Y}]\label{thm:C-G-Y}
Let $(M^4, g)$ be a $Q$-singular $4$-manifold. Then $1 \in \ker \Gamma_g^*$ if and only if $(M^4,g)$ is Bach flat with vanishing $Q$-curvature.
\end{theorem}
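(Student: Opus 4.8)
The plan is to compute $\Gamma_g^*$ explicitly in dimension $4$ and then interpret the equation $\Gamma_g^* 1 = 0$ term by term. Since the paper asserts (citing \cite{L-Y}) that the explicit expression for $\Gamma_g^*$ is available, I would start from that formula, set $f \equiv 1$, and specialize to $n = 4$. Many terms involving derivatives of $f$ drop out, and what remains should be an algebraic combination of the Ricci tensor, the scalar curvature, and the second Bianchi-type contractions built into the variation of $Q$. The target is to massage this combination into a multiple of the Bach tensor plus a term proportional to $Q_g g$ (or $Q_g$ times something), so that $\Gamma_g^* 1 = 0$ becomes ``Bach tensor $= 0$ and $Q_g = 0$.''

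First I would recall that in dimension $4$, $Q_g = \frac{1}{4}\big(-\Delta_g R_g + \ldots\big)$ and that the Bach tensor $B_g$ arises as (a multiple of) the gradient of the conformally invariant functional $\int |W_g|^2\, dv_g$; equivalently, by the Gauss-Bonnet-Chern formula \eqref{Gauss_Bonnet_Chern}, $\int_{M^4} Q_g\, dv_g$ differs from a topological constant by $-\frac{1}{4}\int |W_g|^2$, so the $L^2$-gradient of the total $Q$-curvature functional is (up to sign and constant) the Bach tensor. This is the conceptual shortcut: $\Gamma_g^* 1$ is, by definition of the formal adjoint, the $L^2$-gradient of $g \mapsto \int_{M^4} Q_g\, dv_g$ restricted to the direction of the variation — but one must be careful because $\int Q_g\, dv_g$ varies both through $Q_g$ and through $dv_g$. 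Writing the first variation of $\int Q_g\, dv_g$ as $\int \langle \Gamma_g^* 1, h\rangle\, dv_g + \int Q_g \cdot \frac{1}{2}\mathrm{tr}_g h\, dv_g$, and using \eqref{Gauss_Bonnet_Chern} to identify the left side with $-\frac{1}{4}\delta\int|W_g|^2\, dv_g = $ const $\cdot \int \langle B_g, h\rangle\, dv_g$, I would solve for $\Gamma_g^* 1$ and obtain $\Gamma_g^* 1 = c_1 B_g - \frac{1}{2}Q_g g$ for an explicit constant $c_1 \neq 0$ (consistent with the known trace $\mathrm{tr}_g \Gamma_g^* 1 = -2Q_g$, since $B_g$ is trace-free in dimension $4$: $\mathrm{tr}_g(c_1 B_g - \frac{1}{2}Q_g g) = -2Q_g$). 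Hence $\Gamma_g^* 1 = 0 \iff c_1 B_g = \frac12 Q_g g$; taking the trace forces $Q_g = 0$, and then $B_g = 0$. Conversely, $B_g = 0$ and $Q_g = 0$ give $\Gamma_g^* 1 = 0$ immediately. I would either run this variational argument cleanly, or, as a cross-check, verify the identity $\Gamma_g^* 1 = c_1 B_g - \frac12 Q_g g$ directly from the explicit tensorial formula for $\Gamma_g^*$ in \cite{L-Y} with $n = 4$.

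The main obstacle I anticipate is bookkeeping: making the variational identification rigorous requires knowing that the only conformally-variational obstruction contributing to $\delta \int |W|^2$ is the Bach tensor (true in dimension $4$, where $\int |W|^2\, dv_g$ is conformally invariant and diffeomorphism invariant, so its gradient is a trace-free divergence-free tensor — the Bach tensor), and correctly tracking the split of $\delta \int Q_g\, dv_g$ into the $\Gamma_g^*$ piece and the volume piece. An alternative, more computational route — expanding $\Gamma_g^*$ from its general-$n$ formula and collecting terms at $n=4$ — avoids the variational subtlety but demands care with the coefficients $A_n, B_n, C_n, a_n, b_n$ and with commuting covariant derivatives. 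Either way, once the identity $\Gamma_g^* 1 = c_1 B_g - \frac{1}{2} Q_g g$ is in hand, the theorem follows in two lines by taking traces and trace-free parts.
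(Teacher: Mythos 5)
Your proposal is correct, but it takes a genuinely different route from the one the paper uses. The paper obtains this theorem as the $n=4$ specialization of its explicit formula for $J_g=-\tfrac12\Gamma_g^*1$ (Theorem \ref{thm-J_expression}): starting from the tensorial expression for $\Gamma_g^*$ in \cite{L-Y}, setting $f\equiv 1$, and rewriting everything in terms of the Schouten tensor, one finds $\Gamma_g^*1=-\tfrac2n Q_g g+\tfrac{2}{n-2}B_g+\tfrac{n-4}{2(n-1)(n-2)}T_g$; at $n=4$ the $T_g$ term drops and the two-line trace/trace-free argument you describe finishes the proof. This is exactly the ``more computational route'' you relegate to a cross-check. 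Your primary route --- identifying $\Gamma_g^*1+\tfrac12 Q_g g$ with the $L^2$-gradient of $\int_M Q_g\,dv_g$ and then using the Gauss--Bonnet--Chern formula \eqref{Gauss_Bonnet_Chern} to equate that gradient with a nonzero multiple of the Bach tensor --- is in fact the original Chang--Gursky--Yang argument, as the paper itself notes in the remark following Corollary \ref{cor:J-flat} (``Bach flatness in Theorem \ref{thm:C-G-Y} is derived using the variational property of Bach tensor for $4$-manifolds''). Your bookkeeping is right: the split of $\delta\int Q_g\,dv_g$ into the $\Gamma_g^*1$ piece and the $\tfrac12 Q_g\,\mathrm{tr}_g h$ volume piece gives $\Gamma_g^*1=c_1B_g-\tfrac12 Q_g g$, and the paper's formula confirms $c_1=1$. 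The trade-off is clear: the variational argument is short and conceptual but is confined to dimension $4$ (where $\int|W|^2$ is conformally invariant and Gauss--Bonnet--Chern applies) and, strictly speaking, first yields the identity only on closed manifolds, after which one appeals to locality of both sides to remove compactness; the paper's direct computation is heavier but produces the extra tensor $T_g$ and hence the generalization to all $n\ge 3$ (Corollary \ref{cor:J-flat}), which is the point of the article.
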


To achieve our goal, we need to give the explicit expression of $J$-tensor:
\begin{theorem}\label{thm-J_expression}
For $n\geq 3$,
\begin{align}\label{eqn:J_expression}
J_g = \frac{1}{n} Q_g g  - \frac{1}{n-2} B_g   - \frac{n-4}{4(n-1)(n-2)} T_g,
\end{align}
where $B_g$ is the Bach tensor and
\begin{align*}
T_g:= (n-2)\left( \nabla^2 tr_g S_g - \frac{1}{n} g \Delta_g tr_g S_g\right) + 4(n-1)\left( S_g\times S_g - \frac{1}{n} |S_g|^2 g \right) - n^2 (tr_g S_g) \overset{\circ}{S}_g.
\end{align*}
Here $( S\times S)_{jk} = S_j^iS_{ik}$, $S_g$ is the Schouten tensor and $\overset{\circ}{S}_g$ is its traceless part. 
\end{theorem}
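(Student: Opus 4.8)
The plan is to compute $\Gamma_g^* 1$ directly by first linearizing $Q$-curvature and then taking the $L^2$-adjoint, exploiting the explicit formula $Q_g = A_n \Delta_g R_g + B_n |Ric_g|^2 + C_n R_g^2$. Since the paper states that the full expression for $\Gamma_g^*$ was already worked out in \cite{L-Y}, the most economical route is to start from that formula and simply set $f \equiv 1$, which kills all terms involving derivatives of $f$. Thus $\Gamma_g^* 1$ reduces to the zeroth-order (in $f$) part of the adjoint operator, which is a universal curvature expression built from $\Delta_g R_g$, $\nabla^2 R_g$, $R_g Ric_g$, $|Ric_g|^2 g$, $\nabla^2$ and $\Delta$ applied to Ricci, and so on. I would organize the linearization term by term: write $DQ_g\cdot h = A_n D(\Delta_g R_g)\cdot h + B_n D(|Ric_g|^2)\cdot h + C_n D(R_g^2)\cdot h$, using the standard formulas $DR_g\cdot h = \gamma_g h$, $D Ric_g\cdot h = -\tfrac12 \Delta_L h - \tfrac12 \nabla^2 tr_g h + \delta^*\delta h$ (Lichnerowicz Laplacian), and the variation of the Laplace-Beltrami operator acting on a function. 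Each adjoint is then a matter of integrating by parts.

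Concretely, the key steps in order are: (1) record the three elementary adjoints — $\gamma_g^* 1 = -Ric_g$, the adjoint of $h\mapsto \langle Ric_g, \cdot\rangle$ type terms, and the adjoint of $h \mapsto \Delta_g(tr_g h)$, etc.; (2) assemble $\Gamma_g^* 1$ as a linear combination of $\nabla^2 R_g$, $(\Delta_g R_g) g$, $R_g Ric_g$, $R_g^2 g$, $|Ric_g|^2 g$, $\Delta_L Ric_g$ (or equivalently $\Delta_g Ric_g$ plus curvature corrections via the commutation/Weitzenböck identities), $Ric_g \times Ric_g$, and $\nabla^2 (tr \ Ric)$-type terms; (3) convert everything from Ricci/scalar language into Schouten-tensor language using $S_g = \tfrac{1}{n-2}(Ric_g - \tfrac{1}{2(n-1)}R_g g)$ and $tr_g S_g = \tfrac{1}{2(n-1)}R_g$; (4) recognize the Bach tensor $B_g = \Delta_g S_g - \nabla^2 tr_g S_g + \dots$ (in dimension-$n$ normalized form, $B_g$ being the part that is divergence-free in dimension $4$ and built from the Cotton tensor plus a Weyl contraction) inside the assembled expression and peel it off with coefficient $-\tfrac{1}{n-2}$; (5) collect the remaining second-order and quadratic terms into the stated tensor $T_g$, checking that the trace of the whole right-hand side is $Q_g$ (consistency with $tr_g J_g = Q_g$) and that its divergence is $\tfrac14 dQ_g$ (consistency with the divergence identity in the excerpt) — these two checks pin down essentially all the numerical coefficients and catch algebra errors.

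I expect the main obstacle to be the bookkeeping in step (3)–(4): the Bach tensor in dimension $n$ can be written in several equivalent ways (via $\Delta S - \nabla^2 (tr S)$ plus Weyl, or via the divergence of the Cotton tensor, or via Ricci/scalar directly), and one must choose the normalization consistently with how $B_g$ is defined elsewhere in the paper so that the coefficient $-\tfrac{1}{n-2}$ and the constant $-\tfrac{n-4}{4(n-1)(n-2)}$ in front of $T_g$ come out exactly right. A related subtlety is that $\Delta_L Ric_g$, when written out, produces a full Riemann-curvature contraction $R_{ikjl} Ric^{kl}$; this must be re-expressed using the decomposition of Riemann into Weyl plus Schouten-Kulkarni-Nomizu so that the Weyl terms land precisely in the Bach tensor and only Schouten quadratic terms survive in $T_g$. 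Managing the $n$-dependent coefficients through all these substitutions — and confirming that the dimension-$4$ specialization reproduces the classical relationship with the Bach tensor (which is the bridge to Theorem \ref{thm:C-G-Y}) — is where the real care is needed; the rest is mechanical integration by parts and Weitzenböck manipulation.
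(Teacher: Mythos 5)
Your proposal follows essentially the same route as the paper: set $f\equiv 1$ in the explicit formula for $\Gamma_g^*$ from \cite{L-Y}, rewrite everything in Schouten-tensor language via $Ric = (n-2)S + (tr\, S)g$ and $R = 2(n-1)\, tr\, S$, identify the Bach tensor through its expression $B_g = \Delta_g S - \nabla^2 tr\, S + 2\overset{\circ}{Rm}\cdot S - \cdots$ (the paper's Proposition \ref{prop:Bach_alt}), and collect the remainder into $T_g$ — the paper even uses the trace identity $tr_g\,\Gamma_g^* 1 = -2Q_g$ within the computation, much as you propose using it as a consistency check. The plan is sound and matches the paper's proof in all essentials.
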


\begin{remark}
Note that both the Bach tensor and the tensor $T$ are traceless, thus the traceless part of $J$ is given by
 \begin{align}
\overset{\circ}{J}_g = J_g - \frac{1}{n} Q_g g = - \frac{1}{n-2} \left( B_g   + \frac{n-4}{4(n-1)}T_g\right).
\end{align}
Thus, an equivalent definition for a metric $g$ being $J$-Einstein is
\begin{align}
B_g = -\frac{n-4}{4(n-1)} T_g.
\end{align}
In particular, when $n=4$, $J$-Einstein metrics are exactly Bach flat ones. Hence we can also interpret that $J$-Einstein metric is a generalization of Bach flat metric on $4$-dimensional manifolds.
\end{remark}

\begin{remark}
Gursky introduced a similar tensor for $4$-manifolds from the viewpoint of functional determinant in \cite{Gursky}. In the same article, he also remarked this tensor can be introduced from the perspective of first variations of total $Q$-curvature when dimension is at least 5 (see \cite{Case} for a detailed calculation by Case).  

With the similar perspective, Gover and \O rsted introduced an abstract tensor called \emph{higher Einstein tensor}, which coincides with our $J$-tensor in one of its special case. We refer their article \cite{G-O} for readers who are interested in it.
\end{remark}

Note that for any Einstein metric $g$, its $Q$-curvature is given by
\begin{align*}
Q_g = B_n|Ric_g|^2 + C_n R_g^2 = \left( \frac{1}{n} B_n + C_n \right) R_g^2 = \frac{(n+2)(n-2)}{8n(n-1)^2} R_g^2,
\end{align*}
which is a nonnegative constant and vanishes if and only if $g$ is Ricci flat.

It is easy to check that $T_g = 0$ for any Einstein metric $g$. Combining this with the well-known fact that any Einstein metric is Bach flat, we can easily deduce that any non-flat Einstein metrics are also positive $J$-Einstein and Ricci flat metrics are $J$-flat as well.\\

With the aid of this notion, we can recover and generalize Theorem \ref{thm:C-G-Y} to any dimension $n \geq 3$:
\begin{corollary}\label{cor:J-flat}
Let $(M^n, g)$ be a $Q$-singular $n$-dimensional Riemannian manifold. Then $1 \in \ker \Gamma_g^*$ if and only if $(M^n,g)$ is $J$-flat or equivalently $(M^n,g)$ satisfies 
\begin{align*}
B_g = -\frac{n-4}{4(n-1)} T_g
\end{align*}
with vanishing $Q$-curvature.
\end{corollary}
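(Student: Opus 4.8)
The plan is to reduce the statement to the already-established properties of $J_g$ together with the explicit expression in Theorem \ref{thm-J_expression}. By definition, $J_g = -\frac12 \Gamma_g^* 1$, so the condition $1 \in \ker \Gamma_g^*$ is equivalent to $J_g = 0$, i.e. to $(M^n,g)$ being $J$-flat. This is the entire content of the first equivalence, and it is essentially a tautology once the notion of $J$-tensor is in place.

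For the second equivalence, I would argue as follows. First, $J_g = 0$ forces $\mathrm{tr}_g J_g = 0$; but we have shown $\mathrm{tr}_g J_g = Q_g$, so $Q_g \equiv 0$. Conversely, if $Q_g \equiv 0$ then $J_g$ is traceless, so $J_g = \overset{\circ}{J}_g$. Now invoke the formula from the Remark following Theorem \ref{thm-J_expression}, namely
\begin{align*}
\overset{\circ}{J}_g = -\frac{1}{n-2}\left( B_g + \frac{n-4}{4(n-1)} T_g \right).
\end{align*}
Hence, granting $Q_g \equiv 0$, the vanishing $J_g = 0$ is equivalent to $B_g = -\frac{n-4}{4(n-1)} T_g$. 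Combining the two directions, $J_g = 0$ holds if and only if both $Q_g \equiv 0$ and $B_g = -\frac{n-4}{4(n-1)} T_g$ hold, which is exactly the claimed characterization.

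It remains only to note that Theorem \ref{thm:C-G-Y} is recovered by specializing to $n = 4$: the coefficient $\frac{n-4}{4(n-1)}$ vanishes, so the condition becomes $B_g = 0$ together with $Q_g \equiv 0$, i.e. Bach flat with vanishing $Q$-curvature. I do not anticipate a genuine obstacle here; the only point requiring care is making sure the equivalence "$Q_g \equiv 0$ and $\overset{\circ}{J}_g = 0$" is correctly packaged as "$J_g = 0$", which follows from the orthogonal decomposition $J_g = \frac1n Q_g\, g + \overset{\circ}{J}_g$ into its trace and traceless parts. All the substantive work — the computation of $\Gamma_g^*$, the trace identity $\mathrm{tr}_g \Gamma_g^* 1 = -2Q_g$, and the explicit formula for $J_g$ — has already been carried out in the cited results and in Theorem \ref{thm-J_expression}, so this corollary is a short deduction from them.
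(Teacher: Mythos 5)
Your proposal is correct and follows exactly the route the paper intends (the paper states the corollary as an immediate consequence of Theorem \ref{thm-J_expression}): identify $1\in\ker\Gamma_g^*$ with $J_g=0$ via the definition $J_g=-\tfrac12\Gamma_g^*1$, then split $J_g$ into its trace part $\tfrac1n Q_g\,g$ and traceless part $\overset{\circ}{J}_g=-\tfrac{1}{n-2}\bigl(B_g+\tfrac{n-4}{4(n-1)}T_g\bigr)$, using that $B_g$ and $T_g$ are traceless. No gaps.
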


\begin{remark}
In \cite{C-G-Y}, Bach flatness in Theorem \ref{thm:C-G-Y} is derived using the variational property of Bach tensor for $4$-manifolds.
\end{remark}

As another application of $J$-tensor, we can derive the \emph{Schur Lemma for $Q$-curvature} as follows:
\begin{theorem}[Schur lemma]\label{thm:Schur_Lemma}
Let $(M^n, g)$ be an $n$-dimensional $J$-Einstein manifold with $n\neq 4$ or equivalently, 
\begin{align*}
B_g = -\frac{n-4}{4(n-1)} T_g,
\end{align*}
then $Q_g$ is a constant on $M$.
\end{theorem}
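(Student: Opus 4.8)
The plan is to mimic the classical proof of Schur's lemma for the Ricci tensor, using the two structural identities for the $J$-tensor already recorded in the introduction, namely $\mathrm{tr}_g J_g = Q_g$ and $\mathrm{div}_g J_g = \tfrac14\, dQ_g$. Indeed, these play exactly the role that $\mathrm{tr}_g\mathrm{Ric}_g = R_g$ and $\mathrm{div}_g \mathrm{Ric}_g = \tfrac12\, dR_g$ play in the usual Schur lemma, and the argument should be formally identical. By the Remark following Theorem \ref{thm-J_expression}, the hypothesis $B_g = -\tfrac{n-4}{4(n-1)}T_g$ is literally equivalent to $(M^n,g)$ being $J$-Einstein, so it suffices to work with the condition $J_g = \Lambda g$ for some $\Lambda \in C^\infty(M)$; I would also assume $M$ is connected (otherwise argue on each component).

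The steps I would carry out are: first, take the metric trace of $J_g = \Lambda g$ and invoke $\mathrm{tr}_g J_g = Q_g$ to get $Q_g = n\Lambda$, whence $\Lambda = \tfrac1n Q_g$ and $dQ_g = n\, d\Lambda$. Second, take the divergence of $J_g = \Lambda g$. Since $\nabla g = 0$, a one-line coordinate computation gives $\mathrm{div}_g(\Lambda g) = d\Lambda$ (i.e.\ $\nabla^i(\Lambda g_{ij}) = \nabla_j\Lambda$), while the identity from the introduction gives $\mathrm{div}_g J_g = \tfrac14\, dQ_g$; hence $d\Lambda = \tfrac14\, dQ_g$. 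Third, combine the two relations: $dQ_g = n\, d\Lambda = \tfrac n4\, dQ_g$, so that $\big(1 - \tfrac n4\big)dQ_g = 0$. Since $n \neq 4$ the coefficient $1 - \tfrac n4$ is nonzero, forcing $dQ_g \equiv 0$, so $Q_g$ is constant on $M$.

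Honestly, there is no serious obstacle here: once the trace and divergence identities for $J$ are in hand, the proof is a two-line computation, and all the genuine work lies upstream — in deriving $\mathrm{tr}_g\Gamma_g^*$ and $\delta_g\Gamma_g^* f = \tfrac12 f\, dQ_g$ (done in the introduction) and in establishing the explicit formula of Theorem \ref{thm-J_expression} that makes the $J$-Einstein condition a concrete, checkable one. The only points demanding a bit of care are bookkeeping of the sign/normalization conventions ($\delta_g = -\mathrm{div}_g$) and the elementary fact $\mathrm{div}_g(\Lambda g) = d\Lambda$, together with the implicit connectedness assumption (on a disconnected manifold one only concludes $Q_g$ is locally constant). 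The necessity of excluding $n = 4$ is transparent from the computation, and matches the Remark that in dimension four every $J$-Einstein metric is merely Bach flat, with no constraint on $Q_g$.
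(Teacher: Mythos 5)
Your proposal is correct and follows essentially the same argument as the paper: trace $J_g=\Lambda g$ to get $\Lambda=\tfrac1n Q_g$, take the divergence to get $d\Lambda=\tfrac14 dQ_g$, and combine to conclude $\tfrac{n-4}{4n}\,dQ_g=0$. The only (harmless) additions are your explicit remarks on connectedness and the computation $\mathrm{div}_g(\Lambda g)=d\Lambda$, which the paper leaves implicit.
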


Moreover, the following \emph{Almost-Schur Lemma} holds exactly like the case for Ricci tensor and scalar curvature (cf. \cite{Cheng, L-T, G-W}). 
\begin{theorem}[Almost-Schur Lemma]\label{thm:almost_schur_lemma}
For $n\neq 4$, let $(M^n,g)$ be an $n$-dimensional closed Riemannian manifold with positive Ricci curvature. Then
\begin{align}
\int_M (Q_g - \overline{Q}_g)^2 dv_g \leq \frac{16n(n-1)}{(n-4)^2}\int_M |\overset{\circ}{J}_g|^2 dv_g,
\end{align}
where $\overline{Q}_g$ is the average of $Q_g$. Moreover, the equality holds if and only if $(M,g)$ is $J$-Einstein.
\end{theorem}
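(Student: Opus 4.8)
The plan is to mimic the classical proof of the Almost-Schur Lemma for Ricci/scalar curvature (De Lellis–Topping) adapted via the structural identities $\mathrm{tr}_g J_g = Q_g$ and $\mathrm{div}_g J_g = \tfrac14 dQ_g$ established above. These two identities put $(J_g, Q_g)$ in exactly the same formal relationship as $(\mathrm{Ric}_g, R_g)$ up to a constant, so the argument should transplant essentially verbatim.

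Here is the sequence of steps I would carry out. First, solve the Poisson equation $\Delta_g u = Q_g - \overline{Q}_g$ on the closed manifold $M$; this is solvable since the right-hand side has zero average, and $u$ is determined up to a constant. Second, integrate by parts to get
\begin{align*}
\int_M (Q_g - \overline{Q}_g)^2\, dv_g = \int_M (Q_g - \overline{Q}_g)\,\Delta_g u\, dv_g = -\int_M \langle dQ_g, du\rangle\, dv_g.
\end{align*}
Third, rewrite $dQ_g = 4\,\mathrm{div}_g J_g$ using the divergence identity; since $\mathrm{div}_g(\frac1n Q_g g) = \frac1n dQ_g$, we have $\mathrm{div}_g \overset{\circ}{J}_g = \frac14 dQ_g - \frac1n dQ_g = \frac{n-4}{4n}dQ_g$, hence $dQ_g = \frac{4n}{n-4}\,\mathrm{div}_g \overset{\circ}{J}_g$ (this is where $n\neq 4$ is essential). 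Substituting and integrating by parts once more moves the divergence onto $\nabla^2 u$, and since $\overset{\circ}{J}_g$ is trace-free we may replace $\nabla^2 u$ by its trace-free part $\overset{\circ}{\nabla^2 u} = \nabla^2 u - \frac1n (\Delta_g u) g$:
\begin{align*}
\int_M (Q_g - \overline{Q}_g)^2\, dv_g = \frac{4n}{n-4}\int_M \langle \overset{\circ}{J}_g, \overset{\circ}{\nabla^2 u}\rangle\, dv_g.
\end{align*}
Fourth, apply Cauchy–Schwarz to bound this by $\frac{4n}{n-4}\|\overset{\circ}{J}_g\|_{L^2}\,\|\overset{\circ}{\nabla^2 u}\|_{L^2}$.

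The final step — and the one place where the positive Ricci hypothesis enters — is to control $\|\overset{\circ}{\nabla^2 u}\|_{L^2}$ by $\|\Delta_g u\|_{L^2} = \|Q_g - \overline{Q}_g\|_{L^2}$. Using the Bochner formula $\frac12\Delta_g|du|^2 = |\nabla^2 u|^2 + \langle d\Delta_g u, du\rangle + \mathrm{Ric}_g(du,du)$, integrating over $M$, and discarding the nonnegative Ricci term yields $\int_M |\nabla^2 u|^2 \le \int_M (\Delta_g u)^2$, and subtracting the trace part gives $\int_M |\overset{\circ}{\nabla^2 u}|^2 \le \frac{n-1}{n}\int_M (\Delta_g u)^2$. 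Plugging this in gives
\begin{align*}
\|Q_g - \overline{Q}_g\|_{L^2}^2 \le \frac{4n}{|n-4|}\,\|\overset{\circ}{J}_g\|_{L^2}\cdot \sqrt{\tfrac{n-1}{n}}\,\|Q_g - \overline{Q}_g\|_{L^2},
\end{align*}
and dividing through by $\|Q_g - \overline{Q}_g\|_{L^2}$ and squaring produces the constant $\frac{16n(n-1)}{(n-4)^2}$, as claimed. For the equality case, equality in Cauchy–Schwarz forces $\overset{\circ}{J}_g = \lambda\, \overset{\circ}{\nabla^2 u}$ pointwise, and equality in the Bochner step forces $\mathrm{Ric}_g(du,du)\equiv 0$; since $\mathrm{Ric}_g>0$ this gives $du\equiv 0$, so $u$ is constant, hence $Q_g$ is constant and $\overset{\circ}{\nabla^2 u}=0$, whence $\overset{\circ}{J}_g=0$, i.e. $(M,g)$ is $J$-Einstein. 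Conversely if $g$ is $J$-Einstein then $\overset{\circ}{J}_g = 0$ and by Theorem \ref{thm:Schur_Lemma} $Q_g$ is constant, so both sides vanish.

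I do not expect a serious obstacle here: the only subtlety is bookkeeping the constant $\frac{4n}{n-4}$ correctly from the divergence identity $\mathrm{div}_g J_g = \frac14 dQ_g$ (a factor that differs from the Ricci case $\mathrm{div}_g\mathrm{Ric}_g = \frac12 dR_g$), and making sure the sign/size works out so that the stated constant emerges. Everything else is the standard De Lellis–Topping machinery.
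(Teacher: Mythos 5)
Your proposal is correct and follows essentially the same route as the paper's own proof: the De Lellis--Topping scheme with the Poisson equation $\Delta_g u = Q_g - \overline{Q}_g$, the identity $\mathrm{div}_g \overset{\circ}{J}_g = \tfrac{n-4}{4n}\,dQ_g$, Cauchy--Schwarz against the trace-free Hessian, the Bochner estimate using $\mathrm{Ric}_g>0$, and the same treatment of the equality case. Your use of $|n-4|$ is in fact slightly more careful about the sign of the constant for $n=3$ than the paper's write-up.
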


In order to derive an equivalent form of above inequality, we need to define the \emph{$J$-Schouten tensor} as follows,
\begin{align}
S_J = \frac{1}{n-4} \left( J_g - \frac{3}{4(n-1)} Q_g g\right).
\end{align}

Immediately, we have 
\begin{align}
tr_g S_J = \frac{1}{4(n-1)}Q_g
\end{align}
and
\begin{align}
div_g S_J = \frac{1}{4(n-1)}d Q_g = d tr_g S_J.
\end{align}

\begin{remark}
Recall the definition of classic Schouten tensor
\begin{align}
S_g= \frac{1}{n-2} \left( Ric_g - \frac{1}{2(n-1)} R_g g\right),
\end{align}
we have 
\begin{align}
tr_g S_g = \frac{1}{2(n-1)}R_g
\end{align}
and
\begin{align}
div_g S_g = \frac{1}{2(n-1)}d R_g = d tr_g S_g.
\end{align}
We can see that the tensor $S_J$ shares similar properties with the classic Schouten tensor.
\end{remark}

Following the observation in \cite{G-W}, we get immediately the following result by rewriting the Theorem \ref{thm:almost_schur_lemma} with $J$-Schouten tensor:

\begin{corollary}
For $n\neq 4$, let $(M^n,g)$ be an $n$-dimensional closed Riemannian manifold with positive Ricci curvature. Then
\begin{align}
( Vol_g(M) )^{-\frac{n-8}{n}}\int_M \sigma_2^J(g) dv_g \leq \frac{n-1}{2n} Y^2_Q(g),
\end{align}
where
\begin{align*}
Y_Q(g):=\frac{\int_M \sigma_1^J(g) dv_g }{( Vol_g(M) )^{\frac{n-4}{n}}}
\end{align*}
is the \emph{$Q$-Yamabe quotient} and $\sigma_i^J(g) = \sigma_i(S_J(g))$, $i = 1,2$ are the $i^{th}$-symmetric polynomial of $S_J(g)$. Moreover, the equality holds if and only if $(M,g)$ is $J$-Einstein.

\end{corollary}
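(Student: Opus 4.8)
The plan is to recognize that this inequality is merely a volume-normalized repackaging of the Almost-Schur Lemma (Theorem~\ref{thm:almost_schur_lemma}): once the dictionary between $S_J$ and the pair $(Q_g,\overset{\circ}{J}_g)$ is in place, the argument becomes purely algebraic. First I would record the two pointwise facts that make $S_J$ behave like a Schouten tensor: from $tr_g S_J = \tfrac{1}{4(n-1)}Q_g$ we get $\sigma_1^J(g) = \tfrac{1}{4(n-1)}Q_g$, and since $S_J = \tfrac{1}{n-4}\bigl(J_g - \tfrac{3}{4(n-1)}Q_g g\bigr)$ while $J_g = \overset{\circ}{J}_g + \tfrac{1}{n}Q_g g$, the traceless part of $S_J$ equals $\tfrac{1}{n-4}\overset{\circ}{J}_g$.

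Next, using $\sigma_2(A) = \tfrac{1}{2}\bigl((tr A)^2 - |A|^2\bigr)$ together with the orthogonal decomposition $|A|^2 = |\overset{\circ}{A}|^2 + \tfrac{1}{n}(tr A)^2$, I would compute $\sigma_2^J(g) = \tfrac{n-1}{2n}(tr_g S_J)^2 - \tfrac{1}{2(n-4)^2}|\overset{\circ}{J}_g|^2 = \tfrac{1}{32n(n-1)}Q_g^2 - \tfrac{1}{2(n-4)^2}|\overset{\circ}{J}_g|^2$. Integrating over $M$ and inserting the bound from Theorem~\ref{thm:almost_schur_lemma}, namely $\tfrac{1}{2(n-4)^2}\int_M |\overset{\circ}{J}_g|^2\,dv_g \geq \tfrac{1}{32n(n-1)}\int_M (Q_g - \overline{Q}_g)^2\,dv_g$, the $\int_M Q_g^2\,dv_g$ contributions cancel against the identity $\int_M (Q_g - \overline{Q}_g)^2\,dv_g = \int_M Q_g^2\,dv_g - \overline{Q}_g^2\,Vol_g(M)$, leaving $\int_M \sigma_2^J(g)\,dv_g \leq \tfrac{1}{32n(n-1)}\,\overline{Q}_g^2\,Vol_g(M)$.

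To finish I would use $\int_M \sigma_1^J(g)\,dv_g = \tfrac{1}{4(n-1)}\int_M Q_g\,dv_g = \tfrac{1}{4(n-1)}\,\overline{Q}_g\,Vol_g(M)$ to rewrite $\overline{Q}_g^2\,Vol_g(M) = 16(n-1)^2\bigl(\int_M \sigma_1^J(g)\,dv_g\bigr)^2 / Vol_g(M)$, obtaining $\int_M \sigma_2^J(g)\,dv_g \leq \tfrac{n-1}{2n}\bigl(\int_M \sigma_1^J(g)\,dv_g\bigr)^2 / Vol_g(M)$. Multiplying through by $(Vol_g(M))^{-\frac{n-8}{n}}$ and noting that $-\tfrac{n-8}{n} - 1 = -\tfrac{2(n-4)}{n}$, the right-hand side becomes exactly $\tfrac{n-1}{2n}Y_Q^2(g)$, and the equality case is inherited verbatim from Theorem~\ref{thm:almost_schur_lemma}. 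The only steps requiring any care are the constant- and volume-exponent bookkeeping and the verification that the traceless part of $S_J$ is $\tfrac{1}{n-4}\overset{\circ}{J}_g$; there is no analytic obstacle, since all the substantive work already resides in Theorem~\ref{thm:almost_schur_lemma}.
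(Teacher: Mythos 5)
Your proposal is correct and follows essentially the same route as the paper: compute $\sigma_1^J=\tfrac{1}{4(n-1)}Q_g$ and $\sigma_2^J=\tfrac{n-1}{2n}(\sigma_1^J)^2-\tfrac{1}{2(n-4)^2}|\overset{\circ}{J}_g|^2$ via the orthogonal decomposition of $S_J$, substitute into the Almost-Schur inequality, and convert $\overline{Q}_g^2\,Vol_g(M)$ into the $Q$-Yamabe quotient by the volume-exponent bookkeeping. All constants and the equality case check out.
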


\begin{remark}
The above \emph{Almost Schur Lemma} can be easily generalized to a broader setting by combining the work \cite{G-O}. More detailed discussions together with some related topics will be presented in a subsequent article coming later.
\end{remark}

This article is organized as follows: in Section 2, we derived the explicit formula for $J$-tensor and with the aid of it we proved Theorem \ref{thm-J_expression} and Corollary \ref{cor:J-flat}; We then proved Theorem \ref{thm:Schur_Lemma} (Schur Lemma) and Theorem \ref{thm:almost_schur_lemma} (Almost-Schur Lemma) in Section 3.\\

\paragraph{\textbf{Acknowledgement}}
We would like to thank Professor Sun-Yung Alice Chang, Professor Matthew Gursky and Professor Jeffery Case for their interests in this work and inspiring discussions. Especially, we would like to thank Professor Jeffery Case for bringing our attentions to the work \cite{G-O, Gursky} and his wonderful comments.

Part of the work was done when the first author was in residence at the Mathematical Sciences Research Institute in Berkeley, supported by the NSF grant DMS-1440140 during Spring 2016. Also, part of the work was done when the second author visited Institut Henri Poincar\'e in Paris during Fall 2015. We would like to express our deepest appreciations to both MSRI and IHP for their sponsorship and hospitality.\\


\section{$J$-flatness and $Q$-singular metrics}

We begin with some discussions of conformal tensors.

Let 
\begin{align}
S_{jk} = \frac{1}{n-2} \left(R_{jk} - \frac{1}{2(n-1)} R g_{jk}\right)
\end{align}
be the Schouten tensor. 

For $n\geq 4$, the Bach tensor is defined to be
\begin{align}
B_{jk} = \frac{1}{n-3} \nabla^i \nabla^l W_{ijkl} + W_{ijkl}S^{il}.
\end{align}

In order to extend the definition to $n=3$, we introduce the Cotton tensor as follows
\begin{align}
C_{ijk} = \nabla_i S_{jk} - \nabla_j S_{ik}
\end{align}
and it is related to Weyl tensor by the equation
\begin{align}
\nabla^l W_{ijkl} = (n-3) C_{ijk}.
\end{align}

Therefore, for any $n \geq 3$, we can defined the Bach tensor as
\begin{align}
B_{jk} = \nabla^i C_{ijk} + W_{ijkl}S^{il}.
\end{align}

The following identity is well-known for experts, we include calculations here for the convenience of readers. 

\begin{proposition}\label{prop:Bach_alt}
The Bach tensor can be written as
\begin{align}
B_g = \Delta_g S - \nabla^2 tr S + 2 \overset{\circ}{Rm} \cdot S - (n-4) S\times S  - |S|^2 g -  2 (tr S) S,
\end{align}
where $(\overset{\circ}{Rm} \cdot S)_{jk} = R_{ijkl}S^{il}$ and $( S\times S)_{jk} = S_j^iS_{ik}$. Equivalently, 
\begin{align}
B_g =& \Delta_L S - \nabla^2 tr S + n \left( S\times S  - \frac{1}{n}|S|^2 g \right),
\end{align}
where $\Delta_L$ is the Lichnerowicz Laplacian.
\end{proposition}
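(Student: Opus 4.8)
The plan is to start from the given definition $B_{jk} = \nabla^i C_{ijk} + W_{ijkl}S^{il}$ and expand the divergence of the Cotton tensor. First I would write $\nabla^i C_{ijk} = \nabla^i(\nabla_i S_{jk} - \nabla_j S_{ik}) = \Delta_g S_{jk} - \nabla^i\nabla_j S_{ik}$. The crucial step is to commute the covariant derivatives in the second term: $\nabla^i\nabla_j S_{ik} = \nabla_j\nabla^i S_{ik} + (\text{curvature terms})$. By the contracted second Bianchi identity for the Schouten tensor we have $\nabla^i S_{ik} = \nabla_k \operatorname{tr}_g S$ (this is just the contracted Bianchi identity rewritten via $S$), so $\nabla_j\nabla^i S_{ik} = \nabla_j\nabla_k \operatorname{tr}_g S = (\nabla^2 \operatorname{tr}_g S)_{jk}$. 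Commuting the derivatives produces a Riemann-curvature term $R_{ij}{}^i{}_l S^l{}_k$ (a Ricci contraction) and a term $R_{ij}{}^l{}_k S_i{}^? $ of the form $R_{ijlk}S^{il}$; I would expand the curvature tensor in terms of Weyl plus Schouten (using $R_{ijkl} = W_{ijkl} + (S\owedge g)_{ijkl}$ schematically, i.e.\ the standard decomposition $R_{ijkl}=W_{ijkl}+g_{ik}S_{jl}+g_{jl}S_{ik}-g_{il}S_{jk}-g_{jk}S_{il}$ up to the convention-dependent normalization) so that the Weyl piece $W_{ijkl}S^{il}$ appearing here exactly cancels, up to sign and to a factor, the $W_{ijkl}S^{il}$ term in the definition of $B$, leaving a clean multiple $2W_{ijkl}S^{il} = 2(\overset{\circ}{Rm}\cdot S)_{jk} + (\text{Schouten}\times\text{Schouten and trace terms})$ or more precisely assembling everything into the $2\overset{\circ}{Rm}\cdot S$ term stated. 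Collecting the purely algebraic Schouten terms (products $S\times S$, $|S|^2 g$, $(\operatorname{tr} S)S$) from the curvature decomposition and matching coefficients yields the first displayed formula.

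For the second displayed identity, I would recall that the Lichnerowicz Laplacian acting on a symmetric $2$-tensor $h$ is $\Delta_L h_{jk} = \Delta_g h_{jk} + 2R_{ijkl}h^{il} - R_{jl}h^l{}_k - R_{kl}h^l{}_j$, i.e.\ $\Delta_L h = \Delta_g h + 2\overset{\circ}{Rm}\cdot h - Ric\cdot h - h\cdot Ric$. Applying this with $h = S$ and rewriting $Ric$ in terms of $S$ via $Ric_g = (n-2)S_g + (\operatorname{tr}_g S) g$, the Ricci-contraction terms become $-(n-2)(S\times S)_{jk} - (\operatorname{tr}_g S)S_{jk}$ each, i.e.\ $-2(n-2)S\times S - 2(\operatorname{tr}_g S)S$ in total. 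Substituting $\Delta_g S = \Delta_L S - 2\overset{\circ}{Rm}\cdot S + 2(n-2)S\times S + 2(\operatorname{tr}_g S)S$ into the first formula and simplifying, the $2\overset{\circ}{Rm}\cdot S$ terms cancel and the remaining algebraic terms combine to $n(S\times S) - |S|^2 g$, which after subtracting the trace to form the traceless combination is exactly $n\big(S\times S - \frac{1}{n}|S|^2 g\big)$; one checks the $|S|^2 g$ and $(\operatorname{tr} S)S$ coefficients bookkeep correctly. This gives the stated second form.

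The main obstacle I expect is purely bookkeeping: getting the curvature-commutation signs and the coefficients in the Riemann/Weyl/Schouten decomposition consistent with the sign and normalization conventions used in the paper (in particular the normalization of $\Delta_L$, the sign of $\delta_g = -\operatorname{div}_g$, and the precise form of $R_{ijkl}=W_{ijkl}+\dots$). There is no conceptual difficulty — every step is a standard Bianchi/Ricci-identity manipulation — but the constants $(n-4)$, $2$, $n$ in front of the various quadratic-in-$S$ terms are delicate, so I would double-check by taking traces at the end: $\operatorname{tr}_g B_g = 0$ must come out of both displayed formulas (using $\operatorname{tr}_g(\overset{\circ}{Rm}\cdot S) = Ric\cdot S$ rewritten via $S$, and $\operatorname{tr}_g\Delta_L S = \Delta_g \operatorname{tr}_g S + (\text{terms that vanish appropriately})$), and I would also sanity-check on an Einstein metric, where $S = \frac{R}{2n(n-1)}g$ is parallel, $C_{ijk}=0$, $W_{ijkl}S^{il} = \frac{R}{2n(n-1)}\operatorname{tr}_g W$-type terms vanish, and both formulas must give $B_g = 0$.
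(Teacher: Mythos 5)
Your proposal follows essentially the same route as the paper: expand $\nabla^i C_{ijk}$, commute derivatives using the contracted Bianchi identity $\nabla^i S_{ik}=\nabla_k \operatorname{tr}_g S$, decompose the resulting curvature contractions (together with $W_{ijkl}S^{il}=(\overset{\circ}{Rm}\cdot S)_{jk}-(S\owedge g)_{ijkl}S^{il}$) and substitute $Ric_g=(n-2)S_g+(\operatorname{tr}_g S)g$, then pass to the second form via the Lichnerowicz Laplacian exactly as you describe. The outline and the coefficient bookkeeping (e.g.\ $2(n-2)-(n-4)=n$ for the $S\times S$ term) are correct.
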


\begin{proof}
By the \emph{second contracted Bianchi identity},  
\begin{align*}
\nabla^i S_{ik} &= \frac{1}{n-2} \left( \nabla^i R_{ik} - \frac{1}{2(n-1)} \nabla_k R\right)\\
&= \frac{1}{n-2} \left( \frac{1}{2} \nabla_k R - \frac{1}{2(n-1)} \nabla_k R\right)\\
&= \frac{1}{2(n-1)} \nabla_k R\\
&= \nabla_k tr S
\end{align*}

and
\begin{align*}
tr S  = \frac{1}{n-2} \left( R - \frac{n}{2(n-1)} R\right) = \frac{1}{2(n-1)} R,
\end{align*}

we have
\begin{align*}
Ric = (n-2) S + (tr S) g. 
\end{align*}

Using these facts, 
\begin{align*}
\nabla^i C_{ijk} &= \nabla^i ( \nabla_i S_{jk} - \nabla_j S_{ik} )\\
&= \Delta_g S_{jk} - ( \nabla_j \nabla_i S^i_k + R_{ijp}^i S_k^p - R_{ijk}^p S_p^i )\\
&= \Delta_g S_{jk} - \nabla_j \nabla_k tr S -  (Ric \times S)_{jk} + (\overset{\circ}{Rm} \cdot S)_{jk}\\
&= \Delta_g S_{jk} - \nabla_j \nabla_k tr S -  \left(\left((n-2) S + (tr S)g \right) \times S\right)_{jk} + (\overset{\circ}{Rm} \cdot S)_{jk}\\
&= \Delta_g S_{jk} - \nabla_j \nabla_k tr S -  (n-2) \left(S\times S\right)_{jk} -  (tr S)S_{jk}+ (\overset{\circ}{Rm} \cdot S)_{jk}\\
\end{align*}
and
\begin{align*}
W_{ijkl}S^{il} &= \left( Rm - S\owedge g\right)_{ijkl} S^{il}\\
&= R_{ijkl} S^{il} - (S_{il} g_{jk} +S_{jk} g_{il} -  S_{ik} g_{jl} - S_{jl} g_{ik}) S^{il}\\
&= (\overset{\circ}{Rm} \cdot S)_{jk} - |S|^2 g_{jk} + 2 (S \times S)_{jk} - (tr S) S_{jk},
\end{align*}
where $\owedge$ is the \emph{Kulkarni-Nomizu product}: $$(\alpha \owedge \beta)_{ijkl} := \alpha_{il} \beta_{jk} +\alpha_{jk} \beta_{il} -  \alpha_{ik} \beta_{jl} - \alpha_{jl} \beta_{ik}$$ for any symmetric $2$-tensor $\alpha, \beta \in S_2(M)$.

Combine them, we get
\begin{align*}
B_{jk} &= \Delta_g S_{jk} - \nabla_j \nabla_k tr S + 2(\overset{\circ}{Rm} \cdot S)_{jk}  - (n-4) (S \times S)_{jk}- |S|^2 g_{jk} - 2(tr S) S_{jk}.
\end{align*}

From this,
\begin{align*}
B_{jk} &= \Delta_L S_{jk} + 2 (Ric \times S)_{jk} - \nabla_j \nabla_k tr S  - (n-4) (S \times S)_{jk}- |S|^2 g_{jk} - 2(tr S) S_{jk}\\
&= \Delta_L S_{jk} + 2 \left(\left(Ric -  (tr S) g\right)\times S \right)_{jk}- \nabla_j \nabla_k tr S  - (n-4) (S \times S)_{jk}- |S|^2 g_{jk}\\
&= \Delta_L S - \nabla^2 tr S + n (S\times S)  - |S|^2 g\\
&= \Delta_L  S - \nabla^2 tr S + n \left( S\times S  - \frac{1}{n}|S|^2 g \right).
\end{align*}
\end{proof}

The $Q$-curvature can also be rewritten using Schouten tensor:
\begin{lemma}\label{lem:Q_alt}
\begin{align}
Q_g = - \Delta_g tr S - 2|S|^2 + \frac{n}{2} (tr S)^2.
\end{align}
\end{lemma}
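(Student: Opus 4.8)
The plan is to express $Q_g$ entirely in terms of the Schouten tensor $S_g$ and then simplify, using the algebraic identities already collected in the proof of Proposition~\ref{prop:Bach_alt}. Recall from there that $tr_g S = \frac{1}{2(n-1)} R$ and $Ric = (n-2)S + (tr_g S)g$. I would start from the definition $Q_g = A_n \Delta_g R + B_n |Ric|^2 + C_n R^2$ with $A_n = -\frac{1}{2(n-1)}$, $B_n = -\frac{2}{(n-2)^2}$, $C_n = \frac{n^2(n-4)+16(n-1)}{8(n-1)^2(n-2)^2}$, and substitute each curvature quantity by its Schouten-tensor counterpart.

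First I would handle the Laplacian term: $A_n \Delta_g R = -\frac{1}{2(n-1)} \Delta_g R = -\Delta_g (tr_g S)$, since $tr_g S = \frac{R}{2(n-1)}$. Next I would expand $|Ric|^2$. Using $Ric = (n-2)S + (tr_g S)g$, one gets
\begin{align*}
|Ric|^2 = (n-2)^2 |S|^2 + 2(n-2)(tr_g S)^2 + n (tr_g S)^2 = (n-2)^2|S|^2 + (n^2-n-4)(tr_g S)^2 .
\end{align*}
Wait — $2(n-2) + n = 3n - 4$, not $n^2 - n - 4$; I'd need to be careful here: the cross term is $2(n-2)(tr_g S)\,tr_g(Sg) = 2(n-2)(tr_g S)^2$ and the last term is $(tr_g S)^2 |g|^2 = n(tr_g S)^2$, so indeed $|Ric|^2 = (n-2)^2|S|^2 + (3n-4)(tr_g S)^2$. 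Then $B_n|Ric|^2 = -\frac{2}{(n-2)^2}\big[(n-2)^2|S|^2 + (3n-4)(tr_g S)^2\big] = -2|S|^2 - \frac{2(3n-4)}{(n-2)^2}(tr_g S)^2$. Finally $R^2 = 4(n-1)^2 (tr_g S)^2$, so $C_n R^2 = \frac{n^2(n-4)+16(n-1)}{2(n-2)^2}(tr_g S)^2$.

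The remaining step is to combine the two $(tr_g S)^2$ coefficients and check they sum to $\frac{n}{2}$. That is, I must verify
\begin{align*}
-\frac{2(3n-4)}{(n-2)^2} + \frac{n^2(n-4)+16(n-1)}{2(n-2)^2} = \frac{n}{2}.
\end{align*}
Multiplying through by $2(n-2)^2$, this is the polynomial identity $-4(3n-4) + n^2(n-4) + 16(n-1) = n(n-2)^2$, i.e. $-12n + 16 + n^3 - 4n^2 + 16n - 16 = n^3 - 4n^2 + 4n$, i.e. $n^3 - 4n^2 + 4n = n^3 - 4n^2 + 4n$, which holds. Assembling the pieces yields $Q_g = -\Delta_g tr_g S - 2|S|^2 + \frac{n}{2}(tr_g S)^2$, as claimed.

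I do not expect any genuine obstacle here: the entire proof is the substitution $R \to 2(n-1)\,tr_g S$ and $Ric \to (n-2)S + (tr_g S)g$ followed by bookkeeping of scalar coefficients. The only place to exercise care is the expansion of $|Ric|^2$ (getting the cross terms and the $|g|^2 = n$ factor right) and the final coefficient arithmetic, which I verified above collapses to a true polynomial identity. I would present the computation as a short displayed chain of equalities rather than spelling out every intermediate constant.
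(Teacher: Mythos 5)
Your computation is correct and follows exactly the same route as the paper: substitute $R = 2(n-1)\,tr_g S$ and $Ric = (n-2)S + (tr_g S)g$ into the definition of $Q_g$, expand $|Ric|^2 = (n-2)^2|S|^2 + (3n-4)(tr_g S)^2$, and verify that the $(tr_g S)^2$ coefficient $(3n-4)B_n + 4(n-1)^2 C_n$ collapses to $\frac{n}{2}$. The self-correction of the cross-term coefficient and the final polynomial identity check are both right.
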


\begin{proof}
Using the equalities $Ric = (n-2) S + (tr S)g$ and $R = 2(n-1) tr S$,
\begin{align*}
Q_g &= A_n \Delta_g R + B_n |Ric|^2 + C_n R^2\\
&= 2(n-1) A_n \Delta_g tr S + B_n |(n-2) S + (tr S) g|^2 + 4(n-1)^2 C_n (tr S)^2\\
&= - \Delta_g tr S - 2|S|^2 + ((3n -4)B_n + 4(n-1)^2 C_n )(tr S)^2\\
&= - \Delta_g tr S - 2|S|^2 + \frac{n}{2} (tr S)^2.
\end{align*}
\end{proof}

 We recall the expression of $\Gamma_g^*$ in \cite{L-Y} as follows:
 \begin{lemma}\label{lem:Gamma^*}
\begin{align}
\Gamma_g^* f :=& A_n \left(  - g \Delta^2 f + \nabla^2 \Delta f - Ric \Delta f + \frac{1}{2} g \delta (f dR) + \nabla ( f dR) - f \nabla^2 R  \right)\\ \notag
& - B_n \left( \Delta (f Ric) + 2 f\overset{\circ}{Rm}\cdot Ric + g \delta^2 (f Ric) + 2 \nabla \delta (f
Ric) \right)\\ \notag
&- 2 C_n \left( g\Delta (f R) - \nabla^2 (f R) + f R Ric \right).
\end{align}
\end{lemma}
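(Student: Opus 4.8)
\emph{Proof strategy.} The plan is to obtain $\Gamma_g^* f$ by first computing the linearization $\Gamma_g h = DQ_g\cdot h$ explicitly and then dualizing it term by term, following the computation carried out in \cite{L-Y}. Write $Q_g = A_n \Delta_g R_g + B_n|Ric_g|_g^2 + C_n R_g^2$ and linearize each of the three summands separately, using the standard first-variation formulas (see \cite{Besse, C-L-N, F-M}): the variation of the inverse metric (which supplies the factors $-h^{ij}$ from each raised index), the linearization $DR_g\cdot h = \gamma_g h = -\Delta_g tr_g h + \delta_g^2 h - \langle Ric_g, h\rangle$ recalled in the introduction, the linearization $DRic_g\cdot h$ of the Ricci tensor (conveniently written through the rough or Lichnerowicz Laplacian, the Hessian of $tr_g h$, and a $\nabla\delta_g h$ term), and the variation of the Laplace operator acting on a fixed function $u$, namely $D(\Delta_g u)\cdot h = -\langle h,\nabla^2 u\rangle_g + \langle \delta_g h + \tfrac12 d\,tr_g h, du\rangle_g$. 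Collecting these gives $\Gamma_g h$ as an explicit second-order linear operator in $h$ whose coefficients involve only $g$, $Ric_g$, $R_g$ and their covariant derivatives.

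\emph{Dualization of the $C_n$ and $A_n$ terms.} For $C_n R_g^2$ this is immediate: since $D(R_g^2)\cdot h = 2R_g\,\gamma_g h$, integration by parts gives the contribution $\gamma_g^*(2C_n R_g f) = 2C_n\bigl(\nabla^2(R_g f) - g\,\Delta_g(R_g f) - R_g f\, Ric_g\bigr)$, which is exactly the last line of the claimed formula. For $A_n\Delta_g R_g$, split $D(\Delta_g R_g)\cdot h = \Delta_g(\gamma_g h) + (D\Delta_g\cdot h)[R_g]$; the first piece dualizes to $\gamma_g^*(\Delta_g f) = \nabla^2\Delta_g f - g\,\Delta_g^2 f - (\Delta_g f)\,Ric_g$, accounting for the first three terms of the $A_n$ line, and the second piece, after integrating the divergence-form expression for $D\Delta_g$ by parts twice against $f$, produces the remaining terms $\tfrac12 g\,\delta_g(f\,dR_g) + \nabla(f\,dR_g) - f\,\nabla^2 R_g$.

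\emph{Dualization of the $B_n$ term and the main obstacle.} Here one writes $D|Ric_g|^2\cdot h = -2\langle h, Ric_g\times Ric_g\rangle + 2\langle Ric_g, DRic_g\cdot h\rangle$ and must compute the formal adjoint of $h\mapsto \langle Ric_g, DRic_g\cdot h\rangle$, i.e. evaluate $(DRic_g)^*(f\, Ric_g)$; expressing $DRic_g$ via the rough Laplacian, the double divergence $\delta_g^2$, and $\nabla^2 tr_g$, then integrating by parts, yields $\Delta_g(f\,Ric_g) + g\,\delta_g^2(f\, Ric_g) + 2\nabla\delta_g(f\, Ric_g)$ together with a zeroth-order algebraic term, and combining with the $-2\langle h, Ric_g\times Ric_g\rangle$ contribution the latter collapses into $2f\,\overset{\circ}{Rm}\cdot Ric_g$. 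This last reconciliation is where the bulk of the work lies: one uses the Ricci commutation identities and the second contracted Bianchi identity to trade commutators of covariant derivatives and $\nabla\delta_g$-type terms against the curvature-operator term, while keeping every normalization consistent (sign of $\Delta_g$, the convention $\delta_g = -div_g$, and the scattered factors of $\tfrac12$). Assembling the three contributions and grouping by $A_n$, $B_n$, $C_n$ then gives precisely the expression of Lemma~\ref{lem:Gamma^*}. A useful final check is that tracing the resulting formula must reproduce the identity $tr_g\Gamma_g^* f = \tfrac12\bigl(P_g - \tfrac{n+4}{2}Q_g\bigr)f$ quoted in the introduction, and in particular $tr_g\Gamma_g^*1 = -2Q_g$.
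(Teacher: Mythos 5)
The paper itself gives no proof of this lemma: it is recalled verbatim from \cite{L-Y}, so there is no internal argument to compare against. Your linearize-then-dualize outline is exactly the derivation one would carry out there, and the ingredients you quote are correct: the variation formula $D(\Delta_g u)\cdot h = -\langle h,\nabla^2u\rangle + \langle \delta_g h + \tfrac12 d\,tr_g h, du\rangle$ together with $\gamma_g^*$ reproduces the $A_n$ line term by term, and $\gamma_g^*(2C_nR_gf)$ gives the $C_n$ line on the nose. The one caveat is that for the $|Ric_g|^2$ piece you describe rather than execute the computation of $(DRic_g)^*(f\,Ric_g)$ and the Bianchi/commutator bookkeeping that turns the algebraic remainder plus $-2f\,Ric_g\times Ric_g$ into $2f\,\overset{\circ}{Rm}\cdot Ric_g$; that is the only step with real content, so a complete proof would need it written out (your proposed trace check against $tr_g\Gamma_g^*f=\tfrac12\bigl(P_g-\tfrac{n+4}{2}Q_g\bigr)f$ is a good way to catch errors in it).
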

 
Now we can calculate the explicit expression of $J_g$:
\begin{theorem}
For $n\geq 3$,
\begin{align}
J_g = \frac{1}{n} Q_g g  - \frac{1}{n-2} B_g   - \frac{n-4}{4(n-1)(n-2)} T_g,
\end{align}
where 
\begin{align*}
T_g:= (n-2)\left( \nabla^2 tr_g S_g - \frac{1}{n} g \Delta_g tr_g S_g\right) + 4(n-1)\left( S_g\times S_g - \frac{1}{n} |S_g|^2 g \right) - n^2 (tr_g S_g) \overset{\circ}{S}_g.
\end{align*}
Here $\overset{\circ}{S}_g = S_g - \frac{1}{n} tr_g S_g g$ is the traceless part of Schouten tensor.
\end{theorem}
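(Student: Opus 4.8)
The plan is to compute $J_g = -\frac{1}{2}\Gamma_g^* 1$ directly by setting $f \equiv 1$ in the formula for $\Gamma_g^* f$ of Lemma~\ref{lem:Gamma^*}, and then to massage the resulting expression into the form stated in the theorem using the conformal identities assembled above. With $f\equiv 1$ all the terms involving derivatives of $f$ drop out, so that
\begin{align*}
\Gamma_g^* 1 = A_n\left(-\nabla^2 R + \frac{1}{2}g\Delta R\right) - B_n\left(\Delta Ric + 2\overset{\circ}{Rm}\cdot Ric + g\,\delta^2 Ric + 2\nabla\delta Ric\right) - 2C_n\left(g\Delta R - \nabla^2 R + R\,Ric\right),
\end{align*}
where I have already used $\delta(dR) = \Delta R$ (with the sign convention $\delta = -\operatorname{div}$) and $\nabla(dR) = \nabla^2 R$. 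So $-\tfrac12$ of this is the candidate for $J_g$.

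First I would rewrite everything in terms of the Schouten tensor $S_g$ using $Ric = (n-2)S + (tr S)g$ and $R = 2(n-1)\,tr S$, which converts $\Delta Ric$, $\nabla^2 R$, $\delta^2 Ric$ (recall $\delta S_{ik}$ has the clean form $\nabla_k tr S$, so $\delta^2 Ric$ becomes a multiple of $\Delta tr S$), and $\nabla\delta Ric$ into linear combinations of $\Delta S$, $\Delta_L S$, $\nabla^2 tr S$, $g\Delta tr S$, and the quadratic terms $S\times S$, $|S|^2 g$, $(tr S)S$, $\overset{\circ}{Rm}\cdot S$. At this point I would invoke Proposition~\ref{prop:Bach_alt} to trade the combination $\Delta_L S - \nabla^2 tr S + n(S\times S - \frac1n|S|^2 g)$ for $B_g$, and Lemma~\ref{lem:Q_alt} to trade $-\Delta tr S - 2|S|^2 + \frac n2(tr S)^2$ for $Q_g$. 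After these substitutions the only freedom left is in the purely algebraic (non-Laplacian) terms, and I expect everything to collapse onto the span of $g$, $B_g$, and the second-order tensor $T_g$ defined in the statement; matching coefficients then pins down the scalars $\tfrac1n$, $-\tfrac{1}{n-2}$, and $-\tfrac{n-4}{4(n-1)(n-2)}$. As a consistency check, tracing the identity should reproduce $tr_g J_g = Q_g$ (using that $B_g$ and $T_g$ are traceless), which fixes one linear relation among the coefficients for free.

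The main obstacle is purely bookkeeping: the coefficients $A_n, B_n, C_n$ are unwieldy rational functions of $n$, and one must carefully track the several ways $\nabla^2 R$, $\Delta Ric$ and the two ``divergence-of-divergence'' terms contribute both Laplacian-type and curvature-quadratic pieces after the Bianchi-type commutations (the term $\nabla\delta Ric$ in particular requires commuting $\nabla$ past $\delta$, producing extra Riemann-curvature contractions that must be folded into $\overset{\circ}{Rm}\cdot S$). The risk is a sign or combinatorial slip in the quadratic terms; the safeguards are (i) the trace check above, (ii) testing the final formula on an Einstein metric, where $S = \frac{R}{2n(n-1)}g$ forces $T_g = 0$ and $B_g = 0$, so the identity must reduce to $J_g = \frac1n Q_g g$, consistent with the computation of $Q_g$ for Einstein metrics given in the introduction, and (iii) specializing to $n=4$, where the $T_g$ term drops and $J_g = \frac14 Q_g g - \frac12 B_g$ should match Gursky's tensor. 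Once the algebra is organized around $S_g$ from the outset, the proof is a finite—if lengthy—verification.
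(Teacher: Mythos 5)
Your strategy is exactly the one the paper follows: set $f\equiv 1$ in Lemma \ref{lem:Gamma^*}, convert everything to the Schouten tensor via $Ric=(n-2)S+(tr\,S)g$ and $R=2(n-1)\,tr\,S$, identify $B_g$ through Proposition \ref{prop:Bach_alt} and $Q_g$ through Lemma \ref{lem:Q_alt} (the paper likewise uses $tr_g\Gamma_g^*1=-2Q_g$ to normalize the pure-trace terms), and collect the leftover traceless pieces into $T_g$. The one concrete problem is that your displayed evaluation of $\Gamma_g^*1$ is wrong in the $A_n$-bracket: at $f=1$ the terms $\nabla(f\,dR)=\nabla^2R$ and $-f\nabla^2R$ cancel each other, and with $\delta_g=-div_g$ one has $\delta(dR)=-\Delta R$, so that bracket reduces to $-\tfrac12\,g\,\Delta R$ rather than $-\nabla^2R+\tfrac12\,g\,\Delta R$. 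Carried forward, the spurious $-A_n\nabla^2R$ would corrupt the coefficient of $\nabla^2 tr\,S$ (hence of $T_g$) and the sign flip on $g\,\Delta R$ would corrupt the $g\,\Delta\,tr\,S$ term; your proposed trace check against $tr_g\Gamma_g^*1=-2Q_g$ would signal that something is off, but it cannot repair the traceless part, so the bracket must be corrected before the coefficient matching can close. With that fix the rest of the plan is sound and coincides with the paper's computation.
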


\begin{proof}
By Lemma \ref{lem:Gamma^*},
\begin{align*}
\Gamma_g^* 1 = - &\left(\frac{1}{2}A_n + \frac{1}{2}B_n +  2 C_n \right)g \Delta R + (B_n + 2C_n) \nabla^2 R \\
&- B_n (\Delta Ric + 2 \overset{\circ}{Rm}\cdot Ric  ) - 2 C_n  R Ric. 
\end{align*}

Applying equalities $Ric = (n-2) S + (tr S)g$ and $R = 2(n-1) tr S$,
\begin{align*}
\Gamma_g^* 1 =& - ((n-1)A_n + nB_n +  4(n-1) C_n)g \Delta tr S + 2(n-1)(B_n + 2C_n) \nabla^2 tr S \\
&- (n-2) B_n (\Delta S + 2 \overset{\circ}{Rm}\cdot S  ) - 2(n-2) ( B_n + 2(n-1)C_n ) (tr S) S \\& - 2(B_n + 2(n-1)C_n) (tr S)^2 g\\
=& \frac{3}{2(n-1)} g \Delta tr S + \frac{2}{n-2} (\Delta S + 2 \overset{\circ}{Rm}\cdot S  ) + \frac{n^2 - 10n + 12}{2(n-1)(n-2)}\nabla^2 tr S\\
& - \frac{n^2 - 2n + 4}{2(n-1)}(tr S) S - \frac{n^2 - 2n + 4}{2(n-1)(n-2)}(tr S)^2 g. 
\end{align*}

Since $tr \Gamma_g^* 1 = - 2 Q_g$, by Lemma \ref{lem:Q_alt},
\begin{align*}
\Gamma_g^* 1 + \frac{2}{n}Q_g g
=& \left(\frac{3}{2(n-1)} - \frac{2}{n}\right) g \Delta tr S + \frac{2}{n-2} (\Delta S + 2 \overset{\circ}{Rm}\cdot S  ) + \frac{n^2 - 10n + 12}{2(n-1)(n-2)}\nabla^2 tr S\\
& - \frac{4}{n} |S|^2 g - \frac{n^2 - 2n + 4}{2(n-1)}(tr S) S + \left(1 - \frac{n^2 - 2n + 4}{2(n-1)(n-2)} \right)(tr S)^2 g \\
=&  - \frac{n-4}{2n(n-1)} g \Delta tr S + \frac{2}{n-2} (\Delta S + 2 \overset{\circ}{Rm}\cdot S  ) + \frac{n^2 - 10n + 12}{2(n-1)(n-2)}\nabla^2 tr S\\
& - \frac{4}{n} |S|^2 g - \frac{n^2 - 2n + 4}{2(n-1)}(tr S) S + \frac{n( n- 4)}{2(n-1)(n-2)} (tr S)^2 g. 
\end{align*}

Applying Proposition \ref{prop:Bach_alt},
\begin{align*}
\Gamma_g^* 1 + \frac{2}{n}Q_g g
=& \frac{2}{n-2} B_g - \frac{n-4}{2n(n-1)} g \Delta tr S + \left( \frac{2}{n-2} +\frac{n^2 - 10n + 12}{2(n-1)(n-2)}\right)\nabla^2 tr S\\
& + \frac{2(n-4)}{n-2} S\times S + \left( \frac{2}{n-2}- \frac{4}{n} \right) |S|^2 g+ \left( \frac{4}{n-2} - \frac{n^2 - 2n + 4}{2(n-1)}\right)(tr S) S \\
&+ \frac{n( n- 4)}{2(n-1)(n-2)} (tr S)^2 g.
\end{align*}

That is,
\begin{align*}
\Gamma_g^* 1 + \frac{2}{n}Q_g g
=& \frac{2}{n-2} B_g - \frac{n-4}{2n(n-1)} g \Delta tr S +  \frac{n-4}{2(n-1)}\nabla^2 tr S  + \frac{2(n-4)}{n-2} S\times S \\ &-  \frac{2(n-4)}{n(n-2)}|S|^2 g- \frac{n^2 ( n - 4)}{2(n-1)(n-2)}(tr S) S + \frac{n( n- 4)}{2(n-1)(n-2)} (tr S)^2 g\\
=& \frac{2}{n-2} B_g  + \frac{n-4}{2(n-1)} \left( \nabla^2 tr S - \frac{1}{n} g \Delta tr S \right)+ \frac{2(n-4)}{n-2} \left(S\times S -  \frac{1}{n}|S|^2 g \right)\\ &- \frac{n^2 ( n - 4)}{2(n-1)(n-2)} (tr S) \left( S - \frac{1}{n} (tr S) g \right)\\
=& \frac{2}{n-2} B_g  + \frac{n-4}{2(n-1)(n-2)} T_g,
\end{align*}
where 
\begin{align*}
T_g:= (n-2)\left( \nabla^2 tr_g S_g - \frac{1}{n} g \Delta_g tr_g S_g\right) + 4(n-1)\left( S_g\times S_g - \frac{1}{n} |S_g|^2 g \right) - n^2 (tr_g S_g) \overset{\circ}{S}_g.
\end{align*}

Therefore,
\begin{align*}
J_g = -\frac{1}{2} \Gamma_g^* 1 = \frac{1}{n}Q_g g - \frac{1}{n-2}B_g - \frac{n-4}{4(n-1)(n-2)} T_g.
\end{align*}

\end{proof}

Immediately, we have the following generalization of Theorem \ref{thm:C-G-Y}:
\begin{corollary}
Let $(M^n, g)$ be a $Q$-singular $n$-dimensional Riemannian manifold. Then $1 \in \ker \Gamma_g^*$ if and only if $(M^n,g)$ is $J$-flat or equivalently $(M^n,g)$ satisfies 
\begin{align}
B_g = -\frac{n-4}{4(n-1)} T_g
\end{align}
with vanishing $Q$-curvature.
\end{corollary}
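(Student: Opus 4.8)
The plan is to deduce the corollary directly from Theorem \ref{thm-J_expression} together with the identity $tr_g J_g = Q_g$ established in the introduction. Recall that a metric $g$ is $Q$-singular with $1 \in \ker \Gamma_g^*$ precisely when $\Gamma_g^* 1 = 0$, and by the definition $J_g = -\frac{1}{2}\Gamma_g^* 1$ this is equivalent to $J_g = 0$, i.e. $(M^n,g)$ is $J$-flat. So the only content is to unwind what $J_g = 0$ means in terms of classical tensors.

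First I would observe that $J_g = 0$ forces $tr_g J_g = 0$, which by the identity $tr_g J_g = Q_g$ gives $Q_g \equiv 0$ on $M$. Conversely, if $Q_g = 0$, then the decomposition $J_g = \frac{1}{n}Q_g g + \overset{\circ}{J}_g$ reduces to $J_g = \overset{\circ}{J}_g$, so $J_g = 0$ is equivalent to the traceless part vanishing. Second, using the formula from Theorem \ref{thm-J_expression},
\begin{align*}
\overset{\circ}{J}_g = J_g - \frac{1}{n}Q_g g = -\frac{1}{n-2}\left(B_g + \frac{n-4}{4(n-1)}T_g\right),
\end{align*}
since both $B_g$ and $T_g$ are traceless. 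Hence $\overset{\circ}{J}_g = 0$ is exactly the equation $B_g = -\frac{n-4}{4(n-1)}T_g$. Combining the two observations: $1 \in \ker \Gamma_g^*$ $\iff$ $J_g = 0$ $\iff$ $Q_g = 0$ and $\overset{\circ}{J}_g = 0$ $\iff$ $Q_g = 0$ and $B_g = -\frac{n-4}{4(n-1)}T_g$.

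The argument is essentially a bookkeeping exercise once Theorem \ref{thm-J_expression} is in hand, so there is no genuine obstacle; the one point that deserves a sentence of care is the equivalence $\Gamma_g^* 1 = 0 \iff (Q_g = 0 \text{ and } \overset{\circ}{J}_g = 0)$, which relies on splitting the symmetric $2$-tensor $\Gamma_g^* 1$ into its pure-trace and trace-free parts and noting these vanish independently. I would also remark, for $n = 4$, that $T_g$ drops out of the trace-free equation and the statement collapses to "$g$ is Bach flat with $Q_g = 0$," recovering Theorem \ref{thm:C-G-Y} of Chang-Gursky-Yang, which is the advertised generalization.
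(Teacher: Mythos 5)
Your proposal is correct and matches the paper's argument: the paper deduces the corollary ``immediately'' from Theorem \ref{thm-J_expression} by exactly the splitting you describe, namely $\Gamma_g^*1=0 \iff J_g=0 \iff (tr_g J_g = Q_g = 0$ and $\overset{\circ}{J}_g=0)$, with the trace-free part identified via the explicit formula for $J_g$. Nothing is missing.
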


\begin{remark}
Similar result holds for Ricci curvature: a vacuum static space admits a constant static potential if and only if it is Ricci flat (cf. \cite{F-M}).
\end{remark}

\section{An Almost-Schur lemma for $Q$-curvature}

Since the tensor $J_g$ can be interpreted as a higher-order analogue of Ricci tensor, we can also derive the Schur lemma for $J_g$ as follows:
\begin{theorem}[Schur lemma]
Let $(M^n, g)$ be an $n$-dimensional $J$-Einstein manifold with $n\neq 4$ or equivalently,
\begin{align*}
B_g = -\frac{n-4}{4(n-1)} T_g,
\end{align*}
then $Q_g$ is a constant on $M$.
\end{theorem}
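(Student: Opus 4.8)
The plan is to exploit the two structural identities for $J_g$ established in the Introduction, namely $\operatorname{tr}_g J_g = Q_g$ and $\operatorname{div}_g J_g = \tfrac14 dQ_g$, exactly as one proves the classical Schur lemma from $\operatorname{tr}_g \operatorname{Ric}_g = R_g$ and $\operatorname{div}_g \operatorname{Ric}_g = \tfrac12 dR_g$. First I would write the $J$-Einstein condition as $J_g = \Lambda g$ for some $\Lambda \in C^\infty(M)$. Taking the trace immediately gives $n\Lambda = \operatorname{tr}_g J_g = Q_g$, so $\Lambda = \tfrac{1}{n} Q_g$. Next I would take the divergence of the equation $J_g = \tfrac{1}{n} Q_g \, g$. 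On one hand $\operatorname{div}_g J_g = \tfrac14 dQ_g$ by the identity derived in the Introduction; on the other hand, since $\operatorname{div}_g(\phi g) = d\phi$ for any smooth function $\phi$ (the metric being parallel), we get $\operatorname{div}_g\big(\tfrac1n Q_g g\big) = \tfrac1n dQ_g$. Equating the two expressions yields $\tfrac14 dQ_g = \tfrac1n dQ_g$, i.e. $\big(\tfrac14 - \tfrac1n\big) dQ_g = 0$, that is $\tfrac{n-4}{4n} dQ_g = 0$.

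Since $n \neq 4$, the coefficient $\tfrac{n-4}{4n}$ is nonzero, so $dQ_g = 0$ identically on $M$; if $M$ is connected this forces $Q_g$ to be constant (and in the general case $Q_g$ is constant on each connected component, which is the natural reading of the statement). This also shows $\Lambda$ is constant, so a $J$-Einstein manifold with $n \neq 4$ is in fact $J$-Einstein with constant Einstein factor, paralleling the classical situation.

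I do not anticipate a genuine obstacle here: the argument is a two-line consequence of the trace and divergence identities for $J_g$, and the only place dimension enters is the nonvanishing of $\tfrac{n-4}{4n}$, which is exactly the hypothesis $n \neq 4$. The one point that deserves a word of care is the equivalence with the curvature condition $B_g = -\tfrac{n-4}{4(n-1)} T_g$: by the remark following Theorem \ref{thm-J_expression}, this condition says precisely $\overset{\circ}{J}_g = 0$, which is the same as $J_g = \tfrac1n Q_g g$, hence the same as $J_g = \Lambda g$ with $\Lambda = \tfrac1n Q_g$; so invoking it is just a restatement of $J$-Einstein and the proof above applies verbatim. (If one prefers to avoid referring back to the Introduction's integration-by-parts derivation of $\operatorname{div}_g J_g = \tfrac14 dQ_g$, an alternative is to compute $\operatorname{div}_g$ of the explicit formula in Theorem \ref{thm-J_expression}, using that $B_g$ and $T_g$ are built from the Schouten tensor and the second Bianchi identity, but this is strictly longer and unnecessary.)
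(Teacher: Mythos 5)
Your argument is correct and is essentially identical to the paper's proof: both take the trace of $J_g = \Lambda g$ to get $\Lambda = \tfrac1n Q_g$ and compare the divergence identity $\operatorname{div}_g J_g = \tfrac14 dQ_g$ with $d\Lambda = \tfrac1n dQ_g$ to conclude $\tfrac{n-4}{4n}\,dQ_g = 0$. No issues.
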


\begin{proof}
By the assumption, $J_g = \Lambda g$ for some smooth function $\Lambda$ on $M$. Then $$\Lambda= \frac{1}{n} tr_g J_g = \frac{1}{n} Q_g$$ and $$d\Lambda = div_g J_g = \frac{1}{4}dQ_g.$$

Therefore, $$\frac{n-4}{4n}dQ_g = 0$$ on $M$, which implies that $Q_g$ is a constant on $M$ provided $n\neq 4$.

\end{proof}

\begin{remark}
When $n=4$, $J$-Einstein metrics are exactly Bach flat ones. Due to the conformal invariance of Bach flatness in dimension $4$, we can easily see that the constancy of $Q$-curvature can not always be achieved. Thus the above Schur Lemma does not hold for $4$-dimensional manifolds which is exactly like the classic Schur Lemma for surfaces. 
\end{remark}

In fact,  a more general result can be derived:

\begin{theorem}[Almost-Schur Lemma]
For $n\neq 4$, let $(M^n,g)$ be an $n$-dimensional closed Riemannian manifold with positive Ricci curvature. Then
\begin{align}\label{ineq:almost_Schur}
\int_M (Q_g - \overline{Q}_g)^2 dv_g \leq \frac{16n(n-1)}{(n-4)^2}\int_M |\overset{\circ}{J}_g|^2 dv_g,
\end{align}
where $\overline{Q}_g$ is the average of $Q_g$. Moreover, the equality holds if and only if $(M^n,g)$ is $J$-Einstein.
\end{theorem}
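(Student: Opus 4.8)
The plan is to mimic the classical proof of the Almost-Schur Lemma of De Lellis--Topping \cite{L-T} (as adapted by Ge--Wang \cite{G-W} and Cheng \cite{Cheng}), using the two structural identities established above for the $J$-tensor, namely $\operatorname{tr}_g J_g = Q_g$ and $\operatorname{div}_g J_g = \tfrac14 dQ_g$, which parallel exactly the identities $\operatorname{tr}_g Ric_g = R_g$ and $\operatorname{div}_g Ric_g = \tfrac12 dR_g$ used in the Ricci case. The starting point is to solve the Poisson equation $\Delta_g u = Q_g - \overline{Q}_g$ on the closed manifold $M$; a solution $u \in C^\infty(M)$ exists and is unique up to a constant since $\int_M (Q_g - \overline{Q}_g)\, dv_g = 0$. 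Then I would write, integrating by parts,
\begin{align*}
\int_M (Q_g - \overline{Q}_g)^2\, dv_g = \int_M (Q_g - \overline{Q}_g)\Delta_g u\, dv_g = -\int_M \langle dQ_g, du\rangle\, dv_g.
\end{align*}

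Next I would replace $dQ_g$ using the contracted identity $\operatorname{div}_g J_g = \tfrac14 dQ_g$, so that $\langle dQ_g, du\rangle = 4\langle \operatorname{div}_g J_g, du\rangle$, and then split $J_g = \overset{\circ}{J}_g + \tfrac1n Q_g g$. Integrating by parts once more moves the divergence onto $\nabla^2 u$: since $\overset{\circ}{J}_g$ is traceless one gets $\int_M \langle \operatorname{div}_g \overset{\circ}{J}_g, du\rangle\, dv_g = -\int_M \langle \overset{\circ}{J}_g, \nabla^2 u\rangle\, dv_g = -\int_M \langle \overset{\circ}{J}_g, \overset{\circ}{\nabla^2 u}\rangle\, dv_g$, where $\overset{\circ}{\nabla^2 u} = \nabla^2 u - \tfrac1n (\Delta_g u) g$ is the traceless Hessian, while the $\tfrac1n Q_g g$ piece contributes a term proportional to $\int_M (Q_g - \overline{Q}_g)\Delta_g u\, dv_g$. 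Collecting terms this should yield an identity of the schematic form
\begin{align*}
\Big(1 + \tfrac{4}{n}\Big)\int_M (Q_g - \overline{Q}_g)^2\, dv_g = 4\int_M \langle \overset{\circ}{J}_g, \overset{\circ}{\nabla^2 u}\rangle\, dv_g,
\end{align*}
i.e. $\tfrac{n+4}{n}\int_M (Q_g - \overline{Q}_g)^2 \le 4\big(\int_M |\overset{\circ}{J}_g|^2\big)^{1/2}\big(\int_M |\overset{\circ}{\nabla^2 u}|^2\big)^{1/2}$ by Cauchy--Schwarz. (I should double-check the exact constant; the factor $\tfrac14$ versus $\tfrac12$ in $\operatorname{div}_g J_g$ compared to the Ricci case is what ultimately produces $(n-4)^2$ rather than $(n-2)^2$ in the denominator, and a coefficient may need adjusting to land exactly on $\tfrac{16n(n-1)}{(n-4)^2}$.)

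The remaining ingredient — and the step where the hypothesis of positive Ricci curvature enters — is to bound $\int_M |\overset{\circ}{\nabla^2 u}|^2\, dv_g$ in terms of $\int_M (Q_g - \overline{Q}_g)^2\, dv_g$. This is exactly where the Bochner/Reilly-type argument of De Lellis--Topping is used: one has the pointwise Bochner formula $\tfrac12\Delta_g|du|^2 = |\nabla^2 u|^2 + \langle d\Delta_g u, du\rangle + Ric_g(du,du)$, and $|\nabla^2 u|^2 = |\overset{\circ}{\nabla^2 u}|^2 + \tfrac1n(\Delta_g u)^2$; integrating over the closed manifold and using $Ric_g > 0$ gives
\begin{align*}
\int_M |\overset{\circ}{\nabla^2 u}|^2\, dv_g \le \int_M (\Delta_g u)^2\, dv_g - \tfrac1n\int_M (\Delta_g u)^2\, dv_g = \tfrac{n-1}{n}\int_M (Q_g - \overline{Q}_g)^2\, dv_g,
\end{align*}
after discarding the (nonnegative) Ricci term. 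Feeding this back into the Cauchy--Schwarz inequality above and squaring gives $\big(\tfrac{n+4}{n}\big)^2 \int_M (Q_g-\overline Q_g)^2 \le 16 \cdot \tfrac{n-1}{n}\int_M |\overset{\circ}{J}_g|^2$, which after simplification should be massaged into the stated inequality (again modulo tracking the precise numerical coefficient from the $\tfrac14$ in $\operatorname{div}_g J_g$). The main obstacle is purely bookkeeping: getting every constant right so that the two integrations by parts, the splitting of $J_g$, and the Bochner estimate combine to produce exactly $\tfrac{16n(n-1)}{(n-4)^2}$, and in particular seeing why the coefficient $\tfrac{n+4}{n}$ (or whatever the correct self-improving factor is) on the left is what it is.

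Finally, for the equality case: equality forces equality in the Cauchy--Schwarz step, so $\overset{\circ}{J}_g$ is pointwise proportional to $\overset{\circ}{\nabla^2 u}$, and equality in the Bochner estimate forces both $Ric_g(du,du) \equiv 0$ — hence $du \equiv 0$ since $Ric_g > 0$ — and $\overset{\circ}{\nabla^2 u} \equiv 0$. Then $u$ is constant, so $Q_g \equiv \overline{Q}_g$, both sides vanish, and from the proportionality $\overset{\circ}{J}_g \equiv 0$, i.e. $(M,g)$ is $J$-Einstein. Conversely, if $(M,g)$ is $J$-Einstein then by the Schur Lemma (Theorem \ref{thm:Schur_Lemma}) $Q_g$ is constant, so $Q_g = \overline{Q}_g$ and $\overset{\circ}{J}_g = 0$, and both sides of \eqref{ineq:almost_Schur} are zero; this gives the equality.
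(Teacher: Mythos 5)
Your proposal follows essentially the same route as the paper's proof (the De Lellis--Topping scheme: solve $\Delta_g u = Q_g-\overline Q_g$, use $div_g \overset{\circ}{J}_g = \frac{n-4}{4n}\,dQ_g$, integrate by parts, apply Cauchy--Schwarz, then the Bochner estimate under $Ric_g>0$). Two bookkeeping points need fixing. First, the coefficient in your schematic identity should be $1-\frac4n=\frac{n-4}{n}$ rather than $1+\frac4n$: the $\frac1n Q_g g$ piece of $J_g$ contributes $+\frac4n\int_M(Q_g-\overline Q_g)^2\,dv_g$ on the \emph{same} side as the $\overset{\circ}{J}_g$ term, so it must be subtracted off; with $\frac{n-4}{n}$ the constant $\frac{16n(n-1)}{(n-4)^2}$ then comes out exactly as stated. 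Second, in the equality case, once $u$ is constant the Cauchy--Schwarz proportionality becomes vacuous (any tensor is ``proportional'' to $\overset{\circ}{\nabla^2 u}\equiv 0$), so it does not yield $\overset{\circ}{J}_g\equiv 0$; instead, conclude as the paper does: $Q_g\equiv\overline Q_g$ makes the left-hand side of (\ref{ineq:almost_Schur}) vanish, and the assumed equality then forces $\int_M|\overset{\circ}{J}_g|^2\,dv_g=0$ directly.
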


The proof is along the same line as in \cite{L-T}. For completeness, we include it here for the convenience of readers. For more details, please refer to \cite{L-T}.
\begin{proof}
Let $u$ be the unique solution to 
\begin{align*}
\left\{ \aligned  & \Delta_g u = Q_g - \overline{Q}_g,\\ 
& \int_M u dv_g = 0.
\endaligned\right.
\end{align*}

Then 
\begin{align*}
\int_M (Q_g - \overline{Q}_g)^2 dv_g &= \int_M ( Q_g -\overline{Q}_g ) \Delta_g u\ dv_g  = - \int_M \langle\nabla Q_g, \nabla u \rangle dv_g = - \frac{4n}{n-4} \int_M \langle div_g \overset{\circ}{J}_g, \nabla u \rangle,
\end{align*}
where for the last step we use the fact $$div_g \overset{\circ}{J}_g = div_g \left( J_g - \frac{1}{n} Q_g g\right) = \frac{1}{4} dQ_g - \frac{1}{n} dQ_g = \frac{n-4}{4n} dQ_g.$$

Integrating by parts,
\begin{align*}
 - \frac{4n}{n-4} \int_M \langle div_g \overset{\circ}{J}_g, \nabla u \rangle dv_g
 =& \frac{4n}{n-4} \int_M \langle \overset{\circ}{J}_g, \nabla^2 u \rangle dv_g\\
 =& \frac{4n}{n-4} \int_M \langle \overset{\circ}{J}_g, \nabla^2 u - \frac{1}{n} g\Delta_g u \rangle dv_g \\
 \leq &\frac{4n}{n-4} \left( \int_M |\overset{\circ}{J}_g |^2dv_g \right)^{\frac{1}{2}} \left( \int_M \left|\nabla^2 u - \frac{1}{n} g\Delta_g u \right|^2 dv_g \right)^{\frac{1}{2}}\\
 =& \frac{4n}{n-4} \left( \int_M |\overset{\circ}{J}_g |^2dv_g \right)^{\frac{1}{2}} \left( \int_M |\nabla^2 u |^2 - \frac{1}{n} (\Delta_g u)^2 dv_g \right)^{\frac{1}{2}}.
\end{align*}

From \emph{Bochner formula} and the assumption $Ric_g > 0$,
\begin{align*}
\int_M |\nabla^2 u|^2 dv_g = \int_M (\Delta_g u)^2 dv_g - \int_M Ric_g (\nabla u, \nabla u) dv_g \leq \int_M (\Delta_g u)^2 dv_g.
\end{align*}

Thus,
\begin{align*}
\int_M (Q_g - \overline{Q}_g)^2 dv_g &\leq \frac{4n}{n-4} \left( \int_M |\overset{\circ}{J}_g |^2dv_g \right)^{\frac{1}{2}} \left(  \frac{n - 1}{n} (\Delta_g u)^2 dv_g \right)^{\frac{1}{2}} \\
&= \frac{4n}{n-4} \left( \int_M |\overset{\circ}{J}_g |^2dv_g \right)^{\frac{1}{2}} \left(  \frac{n - 1}{n} (Q_g - \overline{Q}_g)^2 dv_g \right)^{\frac{1}{2}}.
\end{align*}

That is,
\begin{align*}
\int_M (Q_g - \overline{Q}_g)^2 dv_g &\leq \frac{16n(n-1)}{(n-4)^2} \int_M |\overset{\circ}{J}_g |^2dv_g .
\end{align*}

Now we consider the equality case.

If $g$ is $J$-Einstein, then $Q_g$ is a constant by \emph{Schur Lemma} (Theorem \ref{thm:Schur_Lemma}). Thus both sides of inequality (\ref{ineq:almost_Schur}) vanish and equality is achieved.

On the contrary, assume in (\ref{ineq:almost_Schur}) equality is achieved:
\begin{align*}
\int_M (Q_g - \overline{Q}_g)^2 dv_g = \frac{16n(n-1)}{(n-4)^2}\int_M |\overset{\circ}{J}_g|^2 dv_g.
\end{align*}

Then in particular we have 
\begin{align*}
Ric(\nabla u, \nabla u) = 0,
\end{align*}
which implies that $\nabla u = 0$ and hence $u$ is a constant on $M$, since we assume $Ric_g > 0$.

Thus $Q \equiv \overline Q$ on $M$ and 
\begin{align*}
\int_M |\overset{\circ}{J}_g|^2 dv_g = \frac{(n-4)^2}{16n(n-1)}\int_M (Q_g - \overline{Q}_g)^2 dv_g = 0.
\end{align*}

Therefore, $\overset{\circ}{J}_g \equiv 0$ on $M$, \emph{i.e.} $(M, g)$ is $J$-Einstein.
\end{proof}

\begin{remark}
By assuming $$Ric \geq - (n-1) K g$$ for some constant $K \geq 0$ and following the proof in \cite{Cheng}, the inequality (\ref{ineq:almost_Schur}) can be improved to 
\begin{align}
\int_M (Q_g - \overline{Q}_g)^2 dv_g \leq \frac{16n(n-1)}{(n-4)^2} \left( 1 + \frac{nK}{\lambda_1}\right) \int_M |\overset{\circ}{J}_g|^2 dv_g,
\end{align}
where $\lambda_1 > 0$ is the first non-zero eigenvalue of $(-\Delta_g)$.
\end{remark}

Now we can derive an equivalent form of inequality (\ref{ineq:almost_Schur}):
\begin{corollary}
For $n\neq 4$, let $(M^n,g)$ be an $n$-dimensional closed Riemannian manifold with positive Ricci curvature. Then
\begin{align}
( Vol_g(M) )^{-\frac{n-8}{n}}\int_M \sigma_2^J(g) dv_g \leq \frac{n-1}{2n} Y^2_Q(g).
\end{align}
Moreover, the equality holds if and only if $(M^n,g)$ is $J$-Einstein.

\end{corollary}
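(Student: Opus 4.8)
The plan is to reduce everything to pointwise algebraic identities relating $\sigma_1^J(g)$ and $\sigma_2^J(g)$ to $Q_g$ and $|\overset{\circ}{J}_g|^2$, and then feed the Almost-Schur Lemma (Theorem \ref{thm:almost_schur_lemma}) into the resulting integrated inequality, following the rewriting in \cite{G-W}. Recall that for a symmetric $2$-tensor $A$ viewed as a $g$-symmetric endomorphism one has $\sigma_1(A) = tr_g A$ and $\sigma_2(A) = \tfrac{1}{2}\big((tr_g A)^2 - |A|_g^2\big)$. Applying this with $A = S_J(g)$ and splitting $S_J$ into its trace and trace-free parts gives
\begin{align*}
\sigma_2^J(g) = \frac{1}{2}\left( \frac{n-1}{n}(tr_g S_J)^2 - |\overset{\circ}{S}_J|^2\right).
\end{align*}

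Next I would use the two facts already recorded in the excerpt: $tr_g S_J = \tfrac{1}{4(n-1)}Q_g$, and — since $S_J$ and $J_g$ differ only by a multiple of $Q_g\, g$ — the trace-free parts satisfy $\overset{\circ}{S}_J = \tfrac{1}{n-4}\overset{\circ}{J}_g$, hence $|\overset{\circ}{S}_J|^2 = \tfrac{1}{(n-4)^2}|\overset{\circ}{J}_g|^2$. Substituting,
\begin{align*}
\sigma_2^J(g) = \frac{1}{32n(n-1)}Q_g^2 - \frac{1}{2(n-4)^2}|\overset{\circ}{J}_g|^2 ,
\end{align*}
while $\sigma_1^J(g) = \tfrac{1}{4(n-1)}Q_g$ gives $\int_M \sigma_1^J(g)\,dv_g = \tfrac{1}{4(n-1)}\overline{Q}_g\,Vol_g(M)$ and therefore $Y_Q(g) = \tfrac{1}{4(n-1)}\overline{Q}_g\,Vol_g(M)^{4/n}$.

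Then I would integrate the expression for $\sigma_2^J(g)$ and insert the orthogonal decomposition $\int_M Q_g^2\,dv_g = \int_M (Q_g - \overline{Q}_g)^2\,dv_g + \overline{Q}_g^2\,Vol_g(M)$. The key point is that the Almost-Schur Lemma, written as $\tfrac{1}{32n(n-1)}\int_M (Q_g - \overline{Q}_g)^2\,dv_g \le \tfrac{1}{2(n-4)^2}\int_M |\overset{\circ}{J}_g|^2\,dv_g$, exactly cancels the negative $|\overset{\circ}{J}_g|^2$ term, leaving
\begin{align*}
\int_M \sigma_2^J(g)\,dv_g \le \frac{1}{32n(n-1)}\,\overline{Q}_g^2\,Vol_g(M).
\end{align*}
Multiplying through by $Vol_g(M)^{-(n-8)/n}$ and using $1-\tfrac{n-8}{n} = \tfrac{8}{n}$ together with the formula for $Y_Q(g)$ above identifies the right-hand side with $\tfrac{n-1}{2n}Y_Q^2(g)$, which is exactly the claimed inequality.

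For the equality case: if $(M,g)$ is $J$-Einstein then $\overset{\circ}{J}_g \equiv 0$ and, by the Schur Lemma (Theorem \ref{thm:Schur_Lemma}), $Q_g$ is constant, so every error term vanishes and equality holds; conversely, equality here forces equality in the Almost-Schur Lemma, which by Theorem \ref{thm:almost_schur_lemma} means $(M,g)$ is $J$-Einstein. There is no genuine analytic difficulty in this corollary — the Bochner argument was already spent in proving Theorem \ref{thm:almost_schur_lemma}. The only thing to watch carefully is the bookkeeping of constants, and in particular the exact cancellation of the $|\overset{\circ}{J}_g|^2$ contributions, which is precisely what makes the volume-normalized $\sigma_2^J$-integral controlled purely by the $Q$-Yamabe quotient.
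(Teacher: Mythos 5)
Your proposal is correct and follows essentially the same route as the paper: both reduce $\sigma_1^J$ and $\sigma_2^J$ to pointwise expressions in $Q_g$ and $|\overset{\circ}{J}_g|^2$ via the trace/trace-free splitting of $S_J$, substitute into the Almost-Schur inequality, and identify the surviving term with $\frac{n-1}{2n}Y_Q^2(g)$. Your treatment of the equality case (tracing it back to equality in Theorem \ref{thm:almost_schur_lemma}) is also the intended argument, and the constants check out.
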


\begin{proof}
Note that
\begin{align*}
\sigma_1^J (g) = tr_g S_J = \frac{1}{4(n-1)} Q_g
\end{align*}
and 
\begin{align*}
\sigma_2^J (g) = \frac{1}{2} \left( (\sigma_1^J)^2 - |S_J|^2 \right) = \frac{n-1}{2n}(\sigma_1^J)^2 - \frac{1}{2(n-4)^2} |\overset{\circ}{J}_g|^2,
\end{align*}
where we use the fact $$|S_J|^2 = \left|\overset{\circ}{S}_J + \frac{1}{n} (tr_g S_J) g\right|^2 = \left|\frac{1}{n-4}\overset{\circ}{J}_g + \frac{1}{n} (\sigma_1^J) g\right|^2 = \frac{1}{(n-4)^2} |\overset{\circ}{J}_g|^2 + \frac{1}{n} (\sigma_1^J)^2.$$

By substituting these terms in the inequality (\ref{ineq:almost_Schur}), we get
\begin{align*}
\left(\int_M \sigma_1^J(g) dv_g \right)^2 \geq \frac{2n}{n-1} Vol_g(M) \int_M \sigma_2^J(g) dv_g .
\end{align*}

Therefore,
\begin{align*}
\int_M \sigma_2^J(g) dv_g &\leq \frac{n-1}{2n} ( Vol_g(M) )^{-1}\left(\int_M \sigma_1^J(g) dv_g \right)^2\\
&= \frac{n-1}{2n} ( Vol_g(M) )^{\frac{n-8}{n}}\left(\frac{\int_M \sigma_1^J(g) dv_g }{( Vol_g(M) )^{\frac{n-4}{n}}}\right)^2\\
&= \frac{n-1}{2n} ( Vol_g(M) )^{\frac{n-8}{n}}Y^2_Q(g).
\end{align*}

\end{proof}

\begin{remark}
Note that, the \emph{$Q$-Yamabe quotient}
\begin{align*}
Y_Q(g):=\frac{\int_M \sigma_1^J(g) dv_g }{( Vol_g(M) )^{\frac{n-4}{n}}}
\end{align*}
is scaling invariant and in particular, when $n=8$,
\begin{align*}
\int_M \sigma_2^J(g) dv_g &\leq \frac{7}{16} Y^2_Q(g),
\end{align*}
provided that $Ric_g > 0$, where the equality holds if and only if $(M,g)$ is $J$-Einstein.
\end{remark}

\bibliographystyle{amsplain}

\end{document}